\crefname{hypothesis}{Hypothesis}{Hypotheses}
\title{Accelerating Primal-dual Methods for Regularized Markov Decision Processes\thanks{Submitted to the editors DATE.
}}
\author{Haoya Li\thanks{Stanford University, Stanford, CA
  (\email{lihaoya@stanford.edu}).}
\and Hsiangfu Yu\thanks{Amazon Search 
  (\email{hsiangfu@amazon.com}).}
\and Lexing Ying\thanks{Stanford University, Stanford, CA
  (\email{lexing@stanford.edu}).}
\and Inderjit Dhillon\thanks{University of Texas at Austin and Google, work done while at Amazon Search 
  (\email{inderjit@cs.utexas.edu}).}}
\newcommand{\Hlg}{\mathrm{H}}
\newcommand{\diag}{\mathrm{diag}}
\newcommand{\eq}[1]{\begin{align}#1\end{align}}
\newcommand{\bbm}{\begin{bmatrix}}
\newcommand{\ebm}{\end{bmatrix}}
\newcommand{\dt}[1]{\frac{\mathrm{d}#1}{\mathrm{d}t}}
\newcommand{\R}{\mathrm{R}}
\newcommand{\T}{\mathsf{T}}
\newcommand{\p}{\partial}
\newcommand{\grad}{\nabla}
\newcommand{\bu}{\tilde{u}}
\newcommand{\g}{\gamma}
\begin{document}
\nolinenumbers
\maketitle

\begin{abstract}
  Entropy regularized Markov decision processes have been widely used in reinforcement learning.
  This paper is concerned with the primal-dual formulation of the entropy regularized problems.
  Standard first-order methods suffer from slow convergence due to the lack of strict convexity and
  concavity. To address this issue, we first introduce a new quadratically convexified primal-dual
  formulation. The natural gradient ascent descent of the new formulation enjoys a global convergence
  guarantee and exponential convergence rate. We also propose a new interpolating metric that
  further accelerates the convergence significantly. Numerical results are provided to demonstrate
  the performance of the proposed methods under multiple settings.
\end{abstract}

\begin{keywords}
  Reinforcement learning, Markov decision process, primal-dual method, entropy regularization
\end{keywords}

\begin{AMS}
  49M29, 65B99, 65K10, 68T05, 90C40, 90C47, 93D30
\end{AMS}

\section{Introduction}\label{sec:intro}

\subsection{Setup}

Consider an infinite-horizon Markov decision process (MDP)
\cite{bellman2015applied,sutton2018reinforcement,puterman2014markov}
$\mathcal{M}=(S,A,P,r,\gamma)$, where $S$ is a set of states of the Markov chain and $A$ is a set of
actions. $P$ is a transition probability tensor with $P_{ass'}$ being the probability of
transitioning from state $s$ to state $s'$ when taking action $a$, $r$ is a reward matrix with
$r_{sa}$ being the reward obtained when taking action $a$ at state $s$, and $\gamma\in(0,1)$ is the
discount factor. In this paper, we assume that the state space $S$ and the action space $A$ are
finite.

A policy $\pi$ is a randomized strategy over the actions at each state, i.e., for each state $s$,
$\pi_{sa}$ is the probability of choosing action $a$ at $s$. For a given policy, the value function
$v_\pi \in\R^{|S|}$ is a vector defined as
\begin{equation}
  (v_{\pi})_s := \mathbb{E} \sum_{k=0}^\infty \left(\gamma^k r_{s_k a_k}\mid s_0 = s\right),
\end{equation}
where the expectation is taken over all possible trajectories $\{(s_k,a_k)\}_{k\ge 0}$ starting from
$s_0=s$ following the policy $\pi$. The value function $v_\pi$ satisfies the well-known Bellman
equation \cite{bellman2015applied}
\begin{equation}\label{eq:vanillabellman}
  (I-\gamma P_\pi)v_\pi = r_\pi, 
\end{equation}
where $(P_\pi)_{ss'} := \sum_{a\in A}\pi_{sa}P_{ass'}$, $(r_\pi)_{s} :=\sum_{a\in A}\pi_{sa}r_{sa}$, and $I$ is
the identity operator. In a Markov decision problem, the goal is to find the optimal policy
$\pi^*$ such that
\[
v_{\pi^*}(s)\geq v_\pi(s), \quad \forall s\in S,
\]
for any other policy $\pi$. The corresponding optimal value function $v_{\pi^*}$ will also be
referred to as $v^*$ in this paper. The existence of $v^*$ and $\pi^*$ is guaranteed by the theory of
MDP \cite{puterman2014markov}.

In recent studies, entropy regularization has been widely used in MDP problems to encourage
exploration and enhance the robustness
\cite{neu2017unified,dai2018sbeed,geist2019theory,ahmed2019understanding,agarwal2020optimality,mei2020global,cen2020fast,zhan2021policy}.
With the entropy regularization, the value function is defined by
\begin{equation}\label{eq:regVfn}
  (v_{\pi})_s := \mathbb{E} \sum_{k=0}^\infty \left(\gamma^k (r_{s_k a_k}- \tau \log \pi_{s_k a_k})\mid s_0 = s\right),
\end{equation}
where $\tau > 0$ is the regularization coefficient.  $v_\pi$ satisfies the regularized Bellman
equation
\begin{equation}\label{eq:regbellman}
  (I-\gamma P_\pi)v_\pi = r_\pi - \tau h_\pi, 
\end{equation}
where $h_\pi$ is a vector in $\R^{|S|}$ with each entry $(h_\pi)_s$ given by the negative Shannon
entropy of $(\pi_{sa})_{a\in A}$
\[
(h_\pi)_s = \sum_{a\in A} \pi_{sa}\log\pi_{sa}.
\]
Here we overload the notation $v_\pi$ for the regularized value function and for the rest of the
paper $v_\pi$ shall always denote the regularized value function \eqref{eq:regVfn} unless otherwise
specified.  For the entropy regularized MDP (see \cite{geist2019theory}), there exists a unique
optimal policy $\pi^*$, such that \eq{\label{eq:optimality} v^*(s) := v_{\pi^*}(s)\geq v_\pi(s),
  \quad \forall s\in S,} for any other policy $\pi$.

Without loss of generality, {\bf the reward $r_{sa}$ is assumed to be nonnegative} throughout this
paper. This can be guaranteed by adding to the rewards a sufficiently large constant $C$. Note that
such a uniform shift keeps the optimal policy $\pi^*$ unchanged and shifts $v^*$ by a constant
$\frac{C}{1-\gamma}$.

\subsection{Primal-dual formulation}

Entropy regularized MDPs enjoy regularized linear programming formulations, in the primal, dual, and primal-dual
forms. In this paper, we are concerned with the primal-dual formulation (see, for example,
\cite{neu2017unified,ying2020note}):
\[
\min_{v\in\R^{|S|}} \max_{u\in \R^{|S|\times|A|}} \sum_{s\in S} e_s v_s + \sum_{s\in S,a\in A} u_{sa} (r_{sa}-((I-\gamma P_a)v)_s) - \tau \sum_{s\in S,a\in A} u_{sa} \log(u_{sa}/\bu_s),
\]
where $\bu_s:=\sum_{a\in A} u_{sa}$. The policy $\pi$ is related to $u$ via the relationship
\[
\pi_{sa} = u_{sa}/\bu_s.
\]
The main advantage of working with the primal-dual formulation is that the transition matrix $P_a$
appears linearly in the objective function of the primal-dual problem. This linearity brings an
important benefit when a stochastic gradient method is used to solve the primal-dual formulation: an
unbiased estimator of the transition matrix $P_a$ guarantees an unbiased estimator for the
gradient-based update rule. This avoids the famous double-sampling problem
\cite{sutton2018reinforcement} that affects any formulation that performs a nonlinear operation to
the transition matrix $P_a$. Examples of these affected formulations include the primal formulation,
where a nonlinear max or exponentiation operator is applied to $P_a$, and the dual formulation, where
the inverse of $I - \gamma P_\pi$ is needed. From this perspective, the primal-dual formulation
is convenient in the model-free setting, where the transition probability tensor can only be
estimated from samples and is thus inherently noisy.

In what follows, we shall simplify the notation by denoting $K_a=I-\gamma P_a$ and $K_\pi = I-\gamma
P_\pi$. With this simplification, the primal-dual problem can be rewritten more compactly as
\begin{equation} \label{eq:linearpd}
  \min_v \max_u \sum_s e_s v_s + \sum_{sa} u_{sa} (r_{sa}-(K_av)_s) - \tau
  \sum_{sa} u_{sa} \log(u_{sa}/\bu_s).
\end{equation}

Though theoretically appealing, the primal-dual formulation \eqref{eq:linearpd} often poses
computational challenges because it is a minimax optimization. Newton-type methods are often
impractical to apply because either $P_a$ is only accessible via samples or its size is too large
for practical inversion. A close look at the objective function of \eqref{eq:linearpd} suggests that
it is linear with respect to both the value function $v$ and the dual variable $u$ in the radial
direction. This lack of strict convexity/concavity makes it difficult for the first-order methods to
converge.

\subsection{Contributions}
To overcome this difficulty, this paper proposes a {\em quadratically convexified} reformulation of
\eqref{eq:linearpd} that shares the same solution with \eqref{eq:linearpd} and an {\em
  interpolating} natural gradient ascent descent method that significantly speeds up the
convergence. More specifically, the main contributions of this paper are listed as follows:
\begin{itemize}
\item We propose a new quadratically convexified primal-dual formulation in which the linear
  weighted sum $e^\T v$ of \eqref{eq:linearpd} is replaced with a quadratic term $\frac{\alpha}{2}
  \|v\|^2$. The surprising feature is that the solution $(v^*, u^*)$ is unchanged and is independent
  of the hyperparameter $\alpha>0$. We prove that the vanilla natural gradient ascent descent (NGAD)
  of this quadratically convexified problem enjoys a Lyapunov function \cite{lyapunov1992general}
  and converges linearly. To the best of our knowledge, this is the first quadratically convexified
  primal-dual formulation of Markov decision problems.
  
\item We propose an interpolating natural gradient ascent descent (INGAD) by introducing a new
  interpolating metric for the $u$ variable. The corresponding Lyapunov function is constructed and
  the convergence of the new dynamics is proved. The acceleration is verified by numerical tests
  under multiple settings.
\end{itemize}

\subsection{Related work}
Regarding the primal-dual formulation, the first primal-dual learning algorithm is given in
\cite{wang2016online}. A follow-up work \cite{wang2020randomized} leverages the binary-tree data
structure and adaptive importance sampling techniques to reduce the complexity. The convergence
result for these two papers is however only for the average of all the policies rather than the
policy obtained in the last iteration. In these papers, no regularization is used in the formulation
and no preconditioner is used in the iterative update scheme. As a comparison, the current paper
proves a last-iteration convergence result with the help of the Lyapunov method and entropy
regularization, and derives an interpolating metric that accelerates the algorithm.  Various studies
have been carried out following the primal-dual formulation in \cite{wang2016online}. For example, a
modified form with the $Q$-function is proposed in \cite{lee2018stochastic}, and the corresponding
primal-dual type algorithm is derived. An extension to the infinite-horizon average-reward setting
is provided in \cite{wang2017primal}, but only the average-case convergence result is given. A later
work \cite{cho2017deep} further extended this method to the function approximation setting. A
comprehensive review of the primal-dual methods in the average reward setting is given in a recent
thesis \cite{gong2021primal}, and a generalization to the general utility maximization formulation is
provided. The primal-dual method has also been used to find risk-sensitive policies, for example, in
\cite{zhang2021cautious}, where a risk function is integrated into the primal-dual objective through
the dual variable. In the optimization literature, the primal-dual formulation is often called the
saddle point problem: for example, \cite{serrano2020faster} considers a linear relaxation version of
the saddle-point problem in \cite{wang2017primal} to address large-scale problems. However, it is
worth noting that no (entropy) regularization is used in the papers mentioned above, which is able
to make the landscape of the optimization problem smoother and is thus a crucial element of recent
linear convergence results \cite{cen2020fast,li2021quasinewton, lan2023policy}. Linear convergence results can be developed with the presence of precondition. For example, in \cite{khodadadian2021linear}, the authors show that the natural policy gradient method with an exact evaluation of the gradient has a linear convergence rate after sufficiently many gradient steps, where the convergence rate relies on an advantage function gap. Without regularization or preconditioners,
gradient-type methods can take exponential time to converge \cite{li2021softmax}.

Besides the primal-dual formulations, the discussion below briefly touches on the primal and the
dual formulations. For the entropy regularized Markov decision process, the primal formulation
\cite{ying2020note} takes the form
\begin{equation}
  \label{eq:primal}
  v_s = \tau \log\left(\sum_{a} \exp\left( \frac{r_{sa}+\g\sum_{s'} P_{ass'}v_{s'}}{\tau},\right)\right),
\end{equation}
which leads to a value iteration algorithm. Let
$\varphi(v):\mathbb{R}^{|S|}\rightarrow\mathbb{R}^{|S|}$ be the fixed-point map such that
$\varphi(v)_s = \tau \log\left(\sum_{a\in A} \exp\left( \tau^{-1}(r_{sa}+\g\sum_{s'\in S}
P_{ass'}v_{s'}) \right)\right)$. By calculating the derivative matrix, we have
\[
\|D\varphi(v)\|_{\infty} = \underset{s}{\max} \sum_{s'}|(D\varphi(v))_{ss'}| 
= \underset{s}{\max} \sum_{s'}\frac{\gamma\sum_{a}  P_{ass'}\exp\left( \tau^{-1}(r_{sa}+\g\sum_{s''} P_{ass''}v_{s''}) \right)}{\sum_{a}  \exp\left( \tau^{-1}(r_{sa}+\g\sum_{s''} P_{ass''}v_{s''}) \right)} = \gamma.
\]
Hence $\varphi$ is a contraction map and converges to a fixed point, which is the solution to
\eqref{eq:primal} at a linear rate $O(\gamma^T)$, where $T$ is the number of iterations. After
obtaining the optimal value function $v$, the corresponding policy $\pi$ is given by
\cite{ying2020note}:
\eq{\label{eq:vtopi}
  \pi_{sa}=\frac{\exp\left(\tau^{-1}\left(r_{sa}+\gamma \sum_{s' \in \mathcal{S}} P_{ass'} v_{s'}\right) \right)}
     {\sum_a \exp \left(\tau^{-1}\left(r_{sa}+\gamma \sum_{s' \in \mathcal{S}} P_{ass'} v_{s'}\right) \right)}
     = \exp\left(\tau^{-1}\left(r_{sa} - \sum_{s'} (I-\gamma P_a)_{ss'} v_{s'}\right)\right).
}
As a result of the aforementioned double-sampling problem, the value-iteration algorithm based on \eqref{eq:primal} is mainly used in the model-based
setting, but due to the nice properties of $\varphi$, it appears as an important ingredient in
various other algorithms. For example, in \cite{asadi2017alternative} and
\cite{rawlik2013stochastic}, the authors use the function $\varphi$ as an alternative softmax
operator and form a $Q$-learning type algorithm, and in \cite{nachum2017trust}, the function $\varphi$
appears as a result of the inner optimization of an entropy regularized trust region-type formulation and is used to form the loss function. In \cite{dai2018sbeed}, the mean squared regularized Bellman error is employed to establish the optimization problem. 

An alternative way to solve a regularized Markov decision problem in the model-based setting is the dual formulation \cite{ying2020note}, in which one seeks a policy $\pi$ that solves the following optimization problem:
\eq{\label{eq:pg}
\max_\pi e^\T v_\pi := e^\T (I-\gamma P_\pi)^{-1}(r_\pi-\tau h_\pi),
}
where $e\succ0$ is a weight vector. By the existence and uniqueness of the optimal value function
and optimal policy and the optimality \eqref{eq:optimality}, it is clear that any choice of $e$
leads to the optimal policy and the optimal value function. A variety of policy gradient algorithms
can be used to solve the dual problem. Examples include
\cite{williams1992simple,sutton1999policy,kakade2001natural,schulman2015high,schulman2015trust,schulman2017proximal},
to mention only a few. Recently, \cite{li2021quasinewton} proposes a quasi-Newton policy gradient
algorithm, where an approximate Hessian of the objective function in \eqref{eq:pg} is used as a
preconditioner for the gradient, resulting in a quadratic convergence rate by better fitting the
problem geometry.

The word {\em primal-dual} also appears in other types of formulations where the dual variables do
not represent the policy. For example, in \cite{ding2020natural}, the authors apply the natural
policy gradient method to constrained MDPs (CMDPs), where the dual variables are the multipliers of
the constraints. Similarly, in \cite{chow2018lyapunov}, the dual variables come from the constraints
in CMDPs. In this paper, the Lyapunov method is used to give a theoretical analysis of the natural
gradient flow of the method we propose. The idea of Lyapunov methods has also been applied to
discrete time control problems \cite{kalman1960control,kalman1959control} and to discrete
Markovian systems \cite{meyn2012markov}. Recently it has also been used to address the safety
problem, where safety usually appears as additional constraints in the model
\cite{perkins2002lyapunov, chow2018lyapunov,berkenkamp2017safe}, and the Lyapunov function is
usually defined on the state space and is used explicitly in the policy iteration or in finding the
controller.

\subsection{Notations}  For a vector $x\in\mathbb{R}^d$, $\diag{(x)}$ denotes a diagonal matrix with size
$d\times d$ and the $k$-th diagonal element being $x_k$, $1\leq k\leq d$. For $u\in
\mathbb{R}^{|S||A|}$, we denote the $((s-1)|A|+a)$-th element as $u_{sa}$.  While $u_{s\cdot}$
denotes the vector in $\mathbb{R}^{|A|}$ with the $a$-th element being $u_{sa}$, $u_{\cdot a}$
denotes the vector in $\mathbb{R}^{|S|}$ with the $s$-th element being $u_{sa}$. The states of the
MDP are typically referred to with $s$, $s'$, and $s''$ while the actions are referred to by $a$ and
$a'$. The vector with length $d$ and all elements equal to $1$ is denoted by $\mathbf{1}_{d}$, and
the subscript $d$ is often omitted when there is no ambiguity. The $d$-by-$d$ identity matrix is
denoted by $I_{d}$, again with the subscript $d$ often omitted when there is no ambiguity. For a
matrix $B$, $B^\Hlg$ denotes its Hermitian transpose. If a scalar function is applied to a vector,
then the result is defined element-wise unless otherwise specified, e.g., for $x\in \mathbb{R}^d$,
$\exp(x)\in \mathbb{R}^d$ with $\exp(x)_k = \exp(x_k)$ for $1\leq k\leq d$.

\subsection{Contents}  
The rest of the paper is organized as follows. \Cref{sec:strict} derives the quadratically
convexified primal-dual formulation, proves its equivalence with \eqref{eq:linearpd}, and shows that
the vanilla NGAD of the new formulation converges linearly using a Lyapunov function method.
\Cref{sec:metric} introduces an interpolating metric by leveraging the flexibility of
the underlying metric described by the block diagonal part of the Hessian. The convergence rate of the
INGAD based on this new interpolating metric is significantly improved. We also provide a Lyapunov-style proof for global convergence and an analysis of the exponential convergence rate in the
last-iterate sense. Finally, \cref{sec:numerical} demonstrates the numerical performance of
these proposed natural gradient methods.

\section{Quadratically convexified primal-dual formulation}\label{sec:strict}

\subsection{Formulation}\label{sec:equivalence}

In what follows, we use $E_0(v,u)$ to denote the objective of the standard entropy regularized primal-dual formulation
\eq{\label{eq:pdstandard}
\min_v \max_u E_0(v, u) := \sum_{s} e_{s} v_{s} + \sum_{sa} u_{sa} (r_{sa}-(K_av)_s)             - \tau \sum_{sa} u_{sa} \log\frac{u_{sa}}{\bu_s}.
}
Since it is linear in $v$ and linear along the radial direction of $u$, first-order optimization methods
typically experience slow convergence.
To address the issue in the $v$ variable, we propose a quadratically convexified primal-dual formulation:
\eq{\label{eq:strict}
\min_v \max_u E(v, u) := \frac{\alpha}{2}\sum_{s} v_{s}^2 + \sum_{sa} u_{sa} (r_{sa}-(K_av)_s) - \tau \sum_{sa} u_{sa} \log\frac{u_{sa}}{\bu_s}.
}
Though these two formulations look quite different, they are indeed equivalent when $r_{sa}>0$ in the following sense.
\begin{itemize}
\item They share the same optimal value function $v^*$.
\item The optimal dual variable $u^*$ differs only by an $s$-dependent scaling factor. This implies
  that the optimal policy $\pi^*_{sa} \equiv u^*_{sa}/\bu^*_s$ are the same.
\end{itemize}

One geometric way to see this equivalence is to go through the associated primal formulations
\eq{\label{eq:primalopt}
\min_{v} e^{\T} v, \text { s.t. } \forall s, v_{s} \geq \tau\log \left(\sum_{a \in A} \exp \left(\frac{r_{sa}+\gamma \sum_{s'} P_{ass'} v_{s'}}{\tau}\right)\right),
}
and
\eq{\label{eq:primaloptquad}
\min_{v} \frac{\alpha}{2} \|v\|^2, \text { s.t. } \forall s, v_{s} \geq \tau\log \left(\sum_{a \in A} \exp \left(\frac{r_{sa}+\gamma \sum_{s'} P_{ass'} v_{s'}}{\tau}\right)\right).
}
\cref{fig:equivalence} illustrates the primal formulations of a randomly
generated MDP with $|S|=|A|=2$, where the yellow region
represents the feasible set and the red dot represents the optimal value $v^*$. Due to the key
assumption $r_{sa}\geq0$, the feasible set lies in the first quadrant. From the contour
plots of the objective function $e^{\T}v$ and $\|v\|^2$ shown by the dotted curves, it is
clear that both of them are minimized at $v^*$ when constrained to the feasible set.

\begin{figure}[ht!]
\centering
  \includegraphics[scale=0.5]{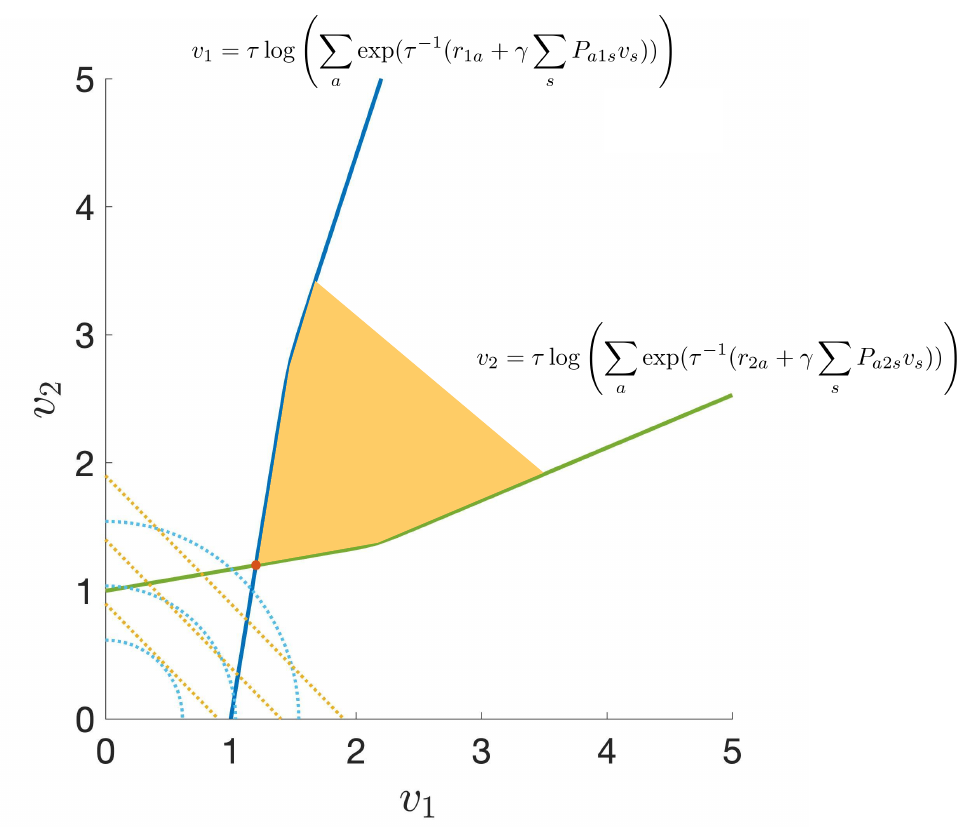}
  \caption{This plot heuristically demonstrates the correctness of the quadratically convexified
    primal-dual formulation on a randomly generated MDP with $|S|=|A|=2$. The yellow region represents the feasible set of the primal problem \eqref{eq:primalopt}, whose boundary corresponds to the solution to equation \eqref{eq:primal} and is shown by the blue and green curves. The red dot denotes the optimal value $v^*$. The cyan and orange dotted curves are contour lines of $\|v\|^2$ and $e^{\T} v$, respectively. It can be seen from this plot that the solution to the quadratically convexified formulation \eqref{eq:primaloptquad} is also $v^*$.  }
  \label{fig:equivalence}
\end{figure}

The following theorem states this equivalence formally, with its proof given in \cref{sec:proofs}.
\begin{theorem}
  \label{thm:equi}
  For an infinite-horizon discounted MDP with finite state space $S$, finite action space $A$ and nonnegative reward $r$, we have the following properties:
  \begin{enumerate}[(a)]
  \item There is a unique solution $(v^*, u^\circ)$ to the primal-dual problem: 
    \[
    \min_v \max_u E_0(v, u) = \sum_s e_s v_s + \sum_{sa} u_{sa} \left(r_{sa}- \sum_{s'} K_{ass'} v_{s'}\right) - \tau \sum_{sa} u_{sa} \log\frac{u_{sa}}{\bu_s},
    \]
    where $v^*$ is the optimal value function defined by \eqref{eq:optimality} and $\frac{u_{sa}^\circ}{\bu_s^\circ}$ gives the optimal policy $\pi_{sa}^*$.
  \item There is a unique solution $(v^*, u^*)$ to the quadratically convexified problem:
    \[
    \min_v \max_u E(v, u) = \frac{\alpha}{2}\sum_{s} v_{s}^2 + \sum_{sa} u_{sa} \left(r_{sa}- \sum_{s'} K_{ass'} v_{s'}\right) - \tau \sum_{sa} u_{sa} \log\frac{u_{sa}}{\bu_s},
    \]
    where $v^*$ is the optimal value function, and $\frac{u_{sa}^*}{\bu_s^*}$ coincides with the optimal policy $\pi_{sa}^*$.
  \end{enumerate}
\end{theorem}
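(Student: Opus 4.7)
The plan is to reduce both minimax problems to constrained primal problems by performing the inner maximization over $u$ in closed form, and then to analyze the reduced problems using a standard dynamic programming super-solution argument based on the contraction map $\varphi$ introduced in \eqref{eq:primal}.

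First, for fixed $v$, I would factor the dual variable as $u_{sa}=\bu_s\,\pi_{sa}$ with $\pi_{s\cdot}$ on the simplex and $\bu_s>0$. The objective decouples across states into $\bu_s\bigl[\sum_a \pi_{sa}(r_{sa}-(K_av)_s)+\tau H(\pi_{s\cdot})\bigr]$, and the inner maximum over $\pi_{s\cdot}$ is the classical log-sum-exp
\[
L_s(v) \;=\; -v_s+\tau\log\sum_a\exp\!\left(\tfrac{r_{sa}+\gamma\sum_{s'}P_{ass'}v_{s'}}{\tau}\right),
\]
attained at $\pi_{sa}\propto\exp(c_{sa}/\tau)$ with $c_{sa}:=r_{sa}-(K_av)_s$. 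Maximizing then over $\bu_s>0$ gives $+\infty$ unless $L_s(v)\le 0$ for every $s$, in which case the supremum is $0$. Hence the $\sup_u$ is finite exactly on the feasible set $C=\{v:v_s\ge\varphi(v)_s\ \forall s\}$, reducing \eqref{eq:pdstandard} to \eqref{eq:primalopt} and \eqref{eq:strict} to \eqref{eq:primaloptquad}.

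Next I would characterize $C$. Because $\varphi$ is monotone and a $\gamma$-contraction in $\|\cdot\|_\infty$ with unique fixed point $v^*$, any $v\in C$ satisfies $v\ge\varphi(v)\ge\varphi^2(v)\ge\cdots\to v^*$, so $v\succeq v^*$ componentwise; and $v^*\in C$ with equality. For the linear objective, $e\succ 0$ immediately gives $e^\T v\ge e^\T v^*$ on $C$ with uniqueness. For the quadratic objective, I would first note that the nonnegativity of rewards forces $v^*\succeq 0$ (e.g.\ directly from \eqref{eq:regVfn} with $\tau\log\pi_{sa}\le 0$), so $v\succeq v^*\succeq 0$ on $C$ implies $\sum_s v_s^2\ge\sum_s (v^*_s)^2$ with equality iff $v=v^*$. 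This proves uniqueness of the primal optimizer $v^*$ for both formulations.

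Finally I would recover the dual optimum from the stationarity conditions at $v^*$. Since $L_s(v^*)=0$, the inner-maximization ratio $\pi^*_{sa}=u^*_{sa}/\bu^*_s=\exp(c^*_{sa}/\tau)/\sum_{a'}\exp(c^*_{sa'}/\tau)$ automatically coincides with the optimal policy formula \eqref{eq:vtopi}, regardless of which objective we use. The scalar factor $\bu^*_s$ is pinned down by $\partial_v E=0$, which yields the flow equation $\bu_s=w_s+\gamma\sum_{s'a'}u_{s'a'}P_{a's's}$ with $w_s=e_s$ for \eqref{eq:pdstandard} and $w_s=\alpha v^*_s$ for \eqref{eq:strict}; since the substitution $u_{sa}=\bu_s\pi^*_{sa}$ turns this into the linear system $(I-\gamma P_{\pi^*}^\T)\bu=w$ with $w\succ 0$ (using $v^*\succ 0$ in the quadratic case) and $P_{\pi^*}^\T$ substochastic, it admits a unique positive solution $\bu^*$. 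This gives existence and uniqueness of $u^*$ and $u^\circ$ and explains why the two optima differ only by the state-wise scaling $\bu^*_s/\bu^\circ_s$.

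The main obstacle is the clean handling of the inner supremum on the open domain $u>0$: the case $L_s(v)=0$ at $v=v^*$ makes the supremum over $\bu_s$ degenerate, so the dual optimum is not recovered from the inner maximization alone but requires combining it with the outer stationarity equation. Establishing positivity $v^*\succ 0$ in the quadratic formulation (so that the induced weight $\alpha v^*$ is a valid weight vector and guarantees uniqueness of $\bu^*$) is the other delicate point, but it follows from $r\ge 0$ together with the regularized Bellman equation \eqref{eq:regbellman}.
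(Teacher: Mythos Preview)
Your approach is correct and takes a genuinely different route from the paper's. The paper works directly with the first-order stationarity conditions: it cites prior results for part~(a), then for part~(b) explicitly constructs $u^*_{sa}=(w_s/\bu^\circ_s)\,u^\circ_{sa}$ with $w=\alpha K_{\pi^*}^{-\T}v^*$, verifies that $(v^*,u^*)$ satisfies \eqref{eq:strict1st}, and proves uniqueness by a convex--concave contradiction argument (strict convexity in $v$ together with concavity in $u$ forces any two saddle points to agree). By contrast, you eliminate $u$ via the log-sum-exp duality to reduce both minimax problems to the constrained primals \eqref{eq:primalopt} and \eqref{eq:primaloptquad}, then exploit monotonicity and the $\gamma$-contraction of $\varphi$ to show the feasible set lies in $\{v:v\succeq v^*\}$, giving primal uniqueness immediately; the dual is recovered by combining the inner-max softmax with the $v$-stationarity flow equation $(I-\gamma P_{\pi^*}^\T)\bu=w$.

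Your route is more self-contained (it handles part~(a) from scratch) and makes the geometric picture of \Cref{fig:equivalence} rigorous. The paper's route is more algebraic and makes the state-wise proportionality $u^*_{s\cdot}\propto u^\circ_{s\cdot}$ explicit by construction. Both arguments hinge on the same delicate ingredient, strict positivity $v^*\succ 0$, which the paper obtains from $\pi^*_{sa}>0\Rightarrow -(h_{\pi^*})_s>0$ and the Neumann series for $K_{\pi^*}^{-1}$. One small imprecision in your sketch: invertibility of $I-\gamma P_{\pi^*}^\T$ already gives \emph{uniqueness} of $\bu^*$; what $v^*\succ 0$ actually buys is \emph{positivity} $\bu^*\succ 0$, so that $u^*$ lies in the open domain and is a bona fide saddle point.
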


\begin{remark}With the same method as the one used for the proof of \Cref{thm:equi}, one can show that the conclusions of \Cref{thm:equi} still hold if the term $\frac{\alpha}{2}\sum_s v_s^2$ in the formulation \eqref{eq:strict} is replaced with a strictly increasing convex function of $v$. The intuition provided in \Cref{fig:equivalence} also applies. 
\end{remark}

\subsection{Natural gradient ascent descent}\label{sec:ANG}

As mentioned earlier, the gradient-based methods for the primal-dual formulation
\eqref{eq:pdstandard} suffer from slow convergence, partly due to the linearity of $E_0(v, u)$ in
$v$.  Since the quadratically convexified scheme \eqref{eq:strict} gives the same value function
$v^*$ and policy $\pi^*$ as the original primal-dual problem \eqref{eq:pdstandard}, we work instead
with \eqref{eq:strict} and propose an NGAD algorithm.

The first-order derivatives of the new objective function $E(v, u)$ are
\eq{\label{eq:gradient}
  \begin{aligned}
    \frac{\p E}{\p v_{s'}} &= \alpha v_{s'} - \sum_{sa} K_{ass'} u_{sa}, \quad s' \in S, \\
    \frac{\p E}{\p u_{sa}} &= \left(r_{sa}-\sum_{s'} K_{ass'} v_{s'} \right) - \tau \log\frac{u_{sa}}{\bu_s}, \quad (s, a) \in S\times A.
  \end{aligned}
}
The diagonal blocks of the second-order derivatives  $\frac{\p^2 E}{\p v^2}$ and $\frac{\p^2 E}{\p u^2}$ are
\eq{\label{eq:hessian}
\begin{aligned}
  \frac{\p^2 E}{\p v_s \p v_{s'}} &= \alpha \delta_{ss'}, \quad (s, s')\in S\times S\\
  \frac{\p^2 E}{\p u_{sa} \p u_{s'a'}} &= -\tau \delta_{ss'} \left(\frac{\delta_{aa'}}{u_{sa}} - \frac{1}{\bu_s} \right), \quad (s, s', a, a')\in S^2\times A^2.
\end{aligned}
}
Of the two diagonal blocks above, $\frac{\p^2 E}{\p v^2}$ is easy to invert since it is diagonal
with positive diagonal entries, whereas $\frac{\p^2 E}{\p u^2}$ is the sum of a diagonal part and a
low-rank part. In the natural gradient dynamics below, we only keep the first part of $\frac{\p^2 E}{\p u^2}$,
namely $-\tau \delta_{ss'}\delta_{aa'}/u_{sa}$ (or more compactly $-\tau\diag{(1/u)}$ in the matrix form).
The resulting NGAD flow is:
\[
\begin{aligned}
  \dt{v_{s'}} & = - \frac{1}{\alpha} \left(\alpha v_{s'} - \sum_{sa} K_{ass'} u_{sa}\right), \quad s'\in S, \\
  \dt{u_{sa}} & = -\frac{1}{\tau} u_{sa} \left(\tau \log \frac{u_{sa}}{\bu_{s}} - \left(r_{sa}-\sum_{s'} K_{ass'} v_{s'}\right) \right), \quad (s, a)\in S\times A,
\end{aligned}
\]
or equivalently,  
\eq{\label{eq:ANGflow1}
\begin{aligned}
  \dt{v_{s'}} & = - \left(v_{s'} - \frac{1}{\alpha} \sum_{sa} K_{ass'} u_{sa} \right), \quad s'\in S,  \\
  \dt{u_{sa}} & = - u_{sa} \left(\log \frac{u_{sa}}{\bu_{s}} - \frac{1}{\tau} \left(r_{sa}-\sum_{s'} K_{ass'} v_{s'}\right) \right), \quad (s, a)\in S\times A.
\end{aligned}
}

To analyze its convergence, we start by identifying a Lyapunov function of this dynamics. By ~\Cref{thm:equi} there is a unique solution $(v^*, u^*)$ to problem \eqref{eq:strict}.  Based
on the solution $(v^*, u^*)$, define
\begin{equation}
  \label{eq:Lyap1}
  L(v,u) = \frac{\alpha}{2} \sum_{s\in S} |v_{s}-v^*_{s}|^2 + \tau \sum_{s\in S,a\in A} \left(u^*_{sa}\log
  \frac{u^*_{sa}}{u_{sa}} + u_{sa}-u^*_{sa} \right).
\end{equation}
The following lemma summarizes some key properties of $L(v,u)$.
\begin{lemma}
  \label{lemma:L}
  $L(v,u)\geq0$ is strictly convex, and the unique minimum is $(v^*,u^*)$, which satisfies
  $L(v^*,u^*)=0$. In addition, any sublevel set of $L$ is bounded.
\end{lemma}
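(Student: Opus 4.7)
The plan is to recognize $L$ as a sum of standard convex divergences, so that non-negativity, strict convexity, uniqueness of the minimum, and coercivity all follow from properties of those pieces. Let $\phi(x)=x\log x - x$ for $x>0$; a direct computation gives
\begin{equation*}
D_\phi(u^*_{sa},u_{sa}) \;=\; \phi(u^*_{sa})-\phi(u_{sa})-\phi'(u_{sa})(u^*_{sa}-u_{sa}) \;=\; u^*_{sa}\log\frac{u^*_{sa}}{u_{sa}}+u_{sa}-u^*_{sa},
\end{equation*}
so
\begin{equation*}
L(v,u) \;=\; \tfrac{\alpha}{2}\,\|v-v^*\|^2 \;+\; \tau\sum_{s,a} D_\phi(u^*_{sa},u_{sa}).
\end{equation*}
Since $\phi''(x)=1/x>0$, each Bregman divergence is non-negative and vanishes iff $u_{sa}=u^*_{sa}$, and the quadratic term is non-negative and vanishes iff $v=v^*$. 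This immediately yields $L\geq 0$ and $L(v^*,u^*)=0$. I will use here that $u^*_{sa}>0$ for all $(s,a)$, which follows from \Cref{thm:equi}: $\pi^*_{sa}=u^*_{sa}/\bu^*_s$ is the optimal softmax policy under entropy regularization and thus strictly positive, and $\bu^*_s>0$ (it is the dual occupancy variable, positive by $e\succ 0$).

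Next, for strict convexity, I would compute the Hessian of $L$: the $v$-block is $\alpha I$, the cross block $\partial^2 L/\partial v\,\partial u$ is zero, and the $u$-block is the diagonal matrix $\tau\,\mathrm{diag}(u^*_{sa}/u_{sa}^2)$, positive definite on the open domain $u>0$. Hence the full Hessian is positive definite, giving strict convexity on $\R^{|S|}\times \R_{>0}^{|S||A|}$. Strict convexity together with $L(v^*,u^*)=0$ gives that $(v^*,u^*)$ is the unique minimizer.

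Finally, for boundedness of sublevel sets $\{L\leq C\}$: the quadratic term gives $\|v-v^*\|\leq\sqrt{2C/\alpha}$, so $v$ is bounded. For each fixed $(s,a)$ with $u^*_{sa}>0$, the scalar function $u\mapsto D_\phi(u^*_{sa},u)$ is continuous on $(0,\infty)$ and satisfies $\lim_{u\to 0^+}D_\phi(u^*_{sa},u)=+\infty$ (through the $u^*_{sa}\log(1/u)$ term) and $\lim_{u\to\infty}D_\phi(u^*_{sa},u)=+\infty$ (through the linear $+u$ term dominating $-u^*_{sa}\log u$). Therefore $\{u:\tau D_\phi(u^*_{sa},u)\leq C\}$ is a compact subinterval of $(0,\infty)$, and the sublevel set of $L$ is contained in a product of such compact intervals, hence bounded.

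The only delicate point I anticipate is justifying $u^*_{sa}>0$; without this, the Bregman-divergence identification breaks down (the log term has no natural meaning at $u^*_{sa}=0$) and coercivity along the coordinate $u_{sa}\to 0^+$ would fail. Once this positivity is invoked from \Cref{thm:equi} and the entropy regularization $\tau>0$, the remaining steps are elementary.
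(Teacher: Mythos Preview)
Your proposal is correct and takes essentially the same approach as the paper: compute the Hessian to obtain strict convexity, identify $(v^*,u^*)$ as the unique minimizer, and establish bounded sublevel sets via coordinatewise coercivity of the summands. Your Bregman-divergence framing is a tidy packaging of the same computation; one small slip is that in the quadratically convexified problem $\bu^*=\alpha K_{\pi^*}^{-\T}v^*$ rather than $K_{\pi^*}^{-\T}e$, so $\bu^*_s>0$ follows from $v^*\succ 0$ (which uses $r\geq 0$) rather than from $e\succ 0$---but the positivity $u^*_{sa}>0$ you need is indeed established in the proof of \Cref{thm:equi}.
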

The next lemma states that $L(v, u)$ is a Lyapunov function of \eqref{eq:ANGflow1}.
\begin{lemma}
\label{lemma:lyap}
  $L(v,u)$ is a Lyapunov function for the dynamics \eqref{eq:ANGflow1}, i.e., $\dt{L}\leq0$ when
$\dt{v}$ and $\dt{u}$ are defined in \eqref{eq:ANGflow1}, and the only trajectory of the dynamics
\eqref{eq:ANGflow1} satisfying $\dt{L}=0$ is $(v, u) = (v^*, u^*)$.
\end{lemma}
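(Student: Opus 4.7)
The plan is to compute $\dot{L}$ along \eqref{eq:ANGflow1}, show the cross terms vanish by an algebraic transposition, recognize the remainder as a negative quadratic plus a sum of KL divergences, and finish off the equality case with a LaSalle-style invariance argument.

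First I would record the partial derivatives $\partial L/\partial v_s = \alpha(v_s - v_s^*)$ and $\partial L/\partial u_{sa} = \tau(1 - u_{sa}^*/u_{sa})$. Using the optimality conditions from \cref{thm:equi}, namely $\alpha v_{s'}^* = \sum_{sa} K_{ass'} u_{sa}^*$ and $\tau \log(u_{sa}^*/\bu_s^*) = r_{sa} - \sum_{s'} K_{ass'} v_{s'}^*$, I rewrite the right-hand sides of \eqref{eq:ANGflow1} as
\eqs{
\dt{v_{s'}} = -(v_{s'}-v_{s'}^*) + \tfrac{1}{\alpha}\sum_{sa}K_{ass'}(u_{sa}-u_{sa}^*), \quad
\dt{u_{sa}} = -u_{sa}\Bigl(\log\tfrac{u_{sa}/u_{sa}^*}{\bu_s/\bu_s^*} + \tfrac{1}{\tau}\sum_{s'}K_{ass'}(v_{s'}-v_{s'}^*)\Bigr),
}
where I used $\log(u_{sa}/\bu_s) - \log(u_{sa}^*/\bu_s^*) = \log(\pi_{sa}/\pi_{sa}^*)$ with $\pi_{sa}=u_{sa}/\bu_s$.

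Plugging these into the chain rule expression for $\dot L$ and grouping gives
\eqs{
\dt{L} = -\alpha\sum_s (v_s - v_s^*)^2 + \underbrace{\sum_s(v_s-v_s^*)\sum_{s'a}K_{as's}(u_{s'a}-u_{s'a}^*) - \sum_{sa}(u_{sa}-u_{sa}^*)\sum_{s'}K_{ass'}(v_{s'}-v_{s'}^*)}_{=\,0\text{ by index relabeling}} - \tau\sum_{sa}(u_{sa}-u_{sa}^*)\log\tfrac{\pi_{sa}}{\pi_{sa}^*}.
}
The cross terms cancel after swapping dummy indices. For the last term, expanding $u_{sa} = \bu_s\pi_{sa}$ and $u_{sa}^* = \bu_s^*\pi_{sa}^*$ yields
\eqs{
\sum_{sa}(u_{sa}-u_{sa}^*)\log\tfrac{\pi_{sa}}{\pi_{sa}^*} = \sum_s \bu_s\,\KL(\pi_s\,\|\,\pi_s^*) + \sum_s \bu_s^*\,\KL(\pi_s^*\,\|\,\pi_s) \;\geq\; 0,
}
with equality iff $\pi_s = \pi_s^*$ for every $s$. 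This already establishes $\dot L \leq 0$, with $\dot L = 0$ forcing $v=v^*$ and $\pi=\pi^*$.

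The harder half is identifying the largest invariant set. On a trajectory that satisfies $\dot L \equiv 0$, we have $v\equiv v^*$ and $\pi\equiv\pi^*$, so $u_{sa}(t) = c_s(t) u_{sa}^*$ for some positive scalars $c_s$. Substituting into \eqref{eq:ANGflow1} gives $\dot u \equiv 0$ (both bracketed pieces vanish), hence each $c_s$ is constant. But $v$ staying at $v^*$ requires $\dot v\equiv 0$, which collapses to $\sum_{sa} K_{ass'}(u_{sa}-u_{sa}^*)=0$ for all $s'$. Writing $\xi_s := \bu_s^*(c_s-1)$, this becomes $\sum_s \xi_s \sum_a \pi_{sa}^*K_{ass'} = 0$, i.e., $(I-\gamma P_{\pi^*})^{\T}\xi = 0$. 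Since $\gamma\in(0,1)$ and $P_{\pi^*}$ is row-stochastic, $I-\gamma P_{\pi^*}$ is invertible, forcing $\xi=0$ and hence $c_s=1$ and $u=u^*$. This is the main obstacle, requiring the LaSalle-type closure of the equality set, and it is exactly where the invertibility of $K_{\pi^*}$ comes in.
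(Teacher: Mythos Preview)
Your proof is correct and follows the same overall architecture as the paper: rewrite the dynamics relative to $(v^*,u^*)$ via the first-order optimality conditions, compute $\dot L$, observe the cross terms cancel, bound the remaining term, and close with the LaSalle-style invariance argument that reduces to the nonsingularity of $K_{\pi^*}=I-\gamma P_{\pi^*}$.

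The one genuine difference is how you handle the entropy-like term $\sum_{sa}(u_{sa}-u_{sa}^*)\bigl(\log\frac{u_{sa}}{\bu_s}-\log\frac{u_{sa}^*}{\bu_s^*}\bigr)$. The paper packages this as a Bregman-type inequality for the convex function $H(z)=\sum_a z_a\log z_a-\bar z\log\bar z$ (their \Cref{lemma:helpconvex}), obtaining nonnegativity from $(z_1-z_2)\cdot(\nabla H(z_1)-\nabla H(z_2))\ge 0$ with equality iff $z_2=cz_1$. You instead factor $u_{sa}=\bu_s\pi_{sa}$ directly and obtain the identity $\sum_s\bu_s\,\KL(\pi_s\|\pi_s^*)+\sum_s\bu_s^*\,\KL(\pi_s^*\|\pi_s)$, which is more elementary and makes the equality condition $\pi_s=\pi_s^*$ transparent without an auxiliary lemma. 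Both give the same set $\{v=v^*,\ u_{s\cdot}=c_s u_{s\cdot}^*\}$, and from there the invariance step is identical. Your extra remark that $\dot u\equiv 0$ on this set (hence $c_s$ constant) is correct but not strictly needed, since $\dot v\equiv 0$ already forces $c_s=1$ pointwise in $t$.
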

The proofs of these two lemmas are given in \cref{sec:proofs}.

\begin{theorem}
\label{thm:conv1}
The dynamics of \eqref{eq:ANGflow1} converges globally to $(v^*,u^*)$.
\end{theorem}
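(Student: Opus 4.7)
The plan is to derive the global convergence of \eqref{eq:ANGflow1} as a direct application of LaSalle's invariance principle, using the Lyapunov function $L$ together with \Cref{lemma:L,lemma:lyap}. Fix an initialization $(v_0,u_0)$ with $u_0$ strictly positive componentwise, and set $L_0 := L(v_0,u_0) < \infty$. I want to show that the forward trajectory $(v(t),u(t))$ exists for all $t\geq 0$ and converges to the unique minimizer $(v^*,u^*)$ of $L$.

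First I would verify that the flow is well-defined and stays in the open domain $\{u>0\}$. The second equation of \eqref{eq:ANGflow1} has the multiplicative form $\dt{u_{sa}} = -u_{sa}\cdot(\cdots)$, so each component $u_{sa}(t)$ preserves its sign; combined with local Lipschitz continuity on $\{u>0\}$, this gives local existence. For global existence, \Cref{lemma:lyap} yields $\dt{L}\leq 0$ along the flow, hence the trajectory is confined to the sublevel set $\Omega_0 := \{(v,u): L(v,u)\leq L_0\}$. By \Cref{lemma:L}, $\Omega_0$ is bounded; moreover, the term $u^*_{sa}\log(u^*_{sa}/u_{sa})$ in \eqref{eq:Lyap1} blows up as any $u_{sa}\to 0^+$ (whenever $u^*_{sa}>0$, which holds for all $(s,a)$ by the softmax form of $\pi^*$), so $\Omega_0$ is also bounded away from the boundary $\{u_{sa}=0\}$. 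Therefore the trajectory lives in a compact subset of the open domain, ruling out finite-time blow-up and guaranteeing existence for all $t\geq 0$.

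With compactness in hand, LaSalle's invariance principle applies: the $\omega$-limit set $\omega(v_0,u_0)$ is a nonempty, compact, invariant subset of $\Omega_0$ and is contained in the largest invariant set $M$ inside $\{(v,u)\in\Omega_0 : \dt{L}(v,u)=0\}$. By \Cref{lemma:lyap}, the only entire trajectory of \eqref{eq:ANGflow1} along which $\dt{L}\equiv 0$ is the stationary one at $(v^*,u^*)$, hence $M=\{(v^*,u^*)\}$ and $\omega(v_0,u_0)=\{(v^*,u^*)\}$. Since $\omega(v_0,u_0)$ is a singleton and the trajectory stays in a compact set, standard arguments then give $(v(t),u(t))\to(v^*,u^*)$ as $t\to\infty$, which is the claim.

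The main obstacle, and the only part requiring genuine care, is the combined argument that the flow does not leave the open positivity domain and does not escape to infinity, since both problems would invalidate the use of LaSalle on the ambient space. Once the sublevel sets of $L$ are shown to be compact subsets of $\{u>0\}$ (using both the growth and the logarithmic singularity of $L$), the rest of the proof reduces to invoking \Cref{lemma:lyap} and LaSalle. I would avoid trying to extract a convergence rate here, postponing that analysis to the refined interpolating scheme in \Cref{sec:metric}.
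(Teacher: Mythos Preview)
Your proposal is correct and follows essentially the same route as the paper: both invoke LaSalle's invariance principle (the paper cites it as the Barbashin--Krasovskii--LaSalle theorem) using \Cref{lemma:L} for boundedness of sublevel sets and \Cref{lemma:lyap} to identify the largest invariant set in $\{\dt{L}=0\}$ as the singleton $\{(v^*,u^*)\}$. Your version is more careful about forward-invariance of $\{u>0\}$ and global existence, which the paper leaves implicit, but the argument is otherwise the same.
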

\begin{proof}
By \Cref{lemma:L}, \Cref{lemma:lyap} and the Barbashin-Krasovskii-LaSalle theorem
\cite{haddad2011nonlinear}, the dynamics of \eqref{eq:ANGflow1} is globally asymptotically stable,
which means the NGAD dynamics converges globally to $(v^*, u^*)$.
\end{proof}

To show the exponential convergence of \eqref{eq:ANGflow1}, we follow Lyapunov's indirect method,
i.e., analyzing the linearization of \eqref{eq:ANGflow1} at $(v^*, u^*)$ and demonstrating that the
real part of the eigenvalues of the corresponding matrix is negative. This result is the content of \Cref{thm:linrate}, with the proof given in \cref{sec:proofs}.

\begin{theorem}
\label{thm:linrate}
The dynamics of \eqref{eq:ANGflow1} converges at rate $O(e^{-ct})$ to $(v^*,u^*)$ for some $c>0$.
\end{theorem}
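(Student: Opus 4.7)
The plan is to carry out Lyapunov's indirect method: linearize the dynamics \eqref{eq:ANGflow1} at the equilibrium $(v^*,u^*)$, obtain the Jacobian $J$, and verify that every eigenvalue of $J$ has strictly negative real part. Local exponential convergence then follows from standard ODE theory, and combining this with the global convergence established in \cref{thm:conv1} (which ensures every trajectory eventually enters the basin of exponential attraction) upgrades the statement to a global rate of $O(e^{-ct})$.

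First I would compute $J$ by differentiating the right-hand side of \eqref{eq:ANGflow1} and using the equilibrium identity $\log(u^*_{sa}/\bu^*_s) = \tau^{-1}(r_{sa}-\sum_{s'}K_{ass'}v^*_{s'})$ to cancel the ``outer'' contribution from the product rule. The result is the block matrix
\[
J = \begin{pmatrix} -I & \tfrac{1}{\alpha} K^{\T} \\ -\tfrac{1}{\tau}\diag(u^*)\,K & -I + \Pi^* \end{pmatrix},
\]
where $\Pi^*$ is the block-diagonal matrix whose $s$-block is $\pi^*_{s\cdot}\mathbf{1}^{\T}$. A natural Lyapunov certificate is the Hessian of $L$ at the equilibrium, $H_L = \mathrm{blockdiag}(\alpha I,\,\tau D)$ with $D = \diag(1/u^*)$; because $D\Pi^*$ is symmetric (its $(sa,s'b)$ entry equals $\delta_{ss'}/\bu^*_s$), a short computation gives
\[
H_L J + J^{\T} H_L = \mathrm{blockdiag}\bigl(-2\alpha I,\; 2\tau D(-I+\Pi^*)\bigr),
\]
with the off-diagonal blocks canceling. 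The bottom block is negative semi-definite by a Cauchy-Schwarz inequality analogous to the one used in the proof of \cref{lemma:lyap}, and its null space is exactly the within-state rescaling directions $N = \{(0,\,(c_s\pi^*_{sa})_{sa}):c\in\mathbb{R}^{|S|}\}$.

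The main obstacle is ruling out purely imaginary eigenvalues of $J$. By the standard Lyapunov argument, any eigenvector with an imaginary eigenvalue must lie in the null space of $H_L J + J^{\T}H_L$, and its $J$-orbit would then be a nonzero $J$-invariant subspace contained in that null space. To exclude this, I would apply $J$ to a generic $z = (0,(c_s\pi^*_{sa}))\in N$ and compute
\[
(Jz)_v \;=\; \tfrac{1}{\alpha}K^{\T}(c_s\pi^*_{sa}) \;=\; \tfrac{1}{\alpha}\bigl(I-\gamma P_{\pi^*}\bigr)^{\T} c \;=\; \tfrac{1}{\alpha}K_{\pi^*}^{\T} c,
\]
using $\sum_a\pi^*_{sa}K_{ass'} = (K_{\pi^*})_{ss'}$. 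Since $\gamma<1$ and $P_{\pi^*}$ is row-stochastic, $K_{\pi^*}$ is invertible, so $c\neq 0$ forces $(Jz)_v\neq 0$ and therefore $Jz\notin N$. Hence $N$ contains no nonzero $J$-invariant subspace, $J$ is Hurwitz, and the exponential convergence claimed in \cref{thm:linrate} follows.
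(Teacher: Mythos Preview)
Your proposal is correct and follows essentially the same route as the paper: linearize at $(v^*,u^*)$, show the symmetric part of a suitably transformed Jacobian is negative semidefinite with a null space consisting of within-state rescalings, and rule out purely imaginary eigenvalues via the invertibility of $K_{\pi^*}^{\T}$. The paper carries this out by conjugating the Jacobian with $P=\mathrm{blockdiag}(\sqrt{\tau}I,\sqrt{\alpha}\,\diag(\sqrt{u^*}))$ and computing the symmetric part of $P^{-1}JP$; your Lyapunov-matrix computation with $H_L=\mathrm{blockdiag}(\alpha I,\tau\,\diag(1/u^*))$ is the same calculation (since $H_L=\alpha\tau\,P^{-\T}P^{-1}$, the two symmetric forms are congruent). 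One small conceptual plus of your phrasing is that $H_L$ is precisely the Hessian of the nonlinear Lyapunov function $L$ at the equilibrium, which explains \emph{why} this particular quadratic form certifies stability of the linearization---the paper does not make this link explicit.
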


Below we discuss the implementation of \eqref{eq:ANGflow1}. By introducing $u_{sa} = \exp(\theta_{sa})$, \eqref{eq:ANGflow1} can be rewritten as
\eq{\label{eq:ANGflow1_p}
\begin{aligned}
  \dt{v_{s'}} & = - \left(v_{s'} - \frac{1}{\alpha} \sum_{sa} K_{ass'} \exp(\theta_{sa}) \right), \quad s'\in S,  \\
  \dt{\theta_{sa}} & = - \left(\theta_{sa}-\log\left(\sum_a \exp(\theta_{sa})\right) - \frac{1}{\tau} \left(r_{sa}-\sum_{s'} K_{ass'} v_{s'}\right) \right), \quad (s, a)\in S\times A.
\end{aligned}
}
With a learning rate $\eta>0$, this leads to the update rule
\eq{\label{eq:ANGflow1_d}
\begin{aligned}
  v_{s'} & \leftarrow (1-\eta)v_{s'} + \frac{\eta}{\alpha} \sum_{sa} K_{ass'} \exp(\theta_{sa}),
  \quad s'\in S, \\ \theta_{sa} & \leftarrow (1-\eta)\theta_{sa}+\eta\log\left(\sum_a
  \exp(\theta_{sa})\right) + \frac{\eta}{\tau} \left(r_{sa}-\sum_{s'} K_{ass'} v_{s'}\right), \quad
  (s, a)\in S\times A.
\end{aligned}
}

The details of the algorithm are summarized in \Cref{alg:ANG1}. 
\begin{algorithm}[htbp]
    \caption{Standard NGAD for quadratically convexified formulation} 
    \label{alg:ANG1}
    \begin{algorithmic}[1]
        \REQUIRE the MDP model $\mathcal{M} = (S, A, P, r, \gamma)$, initialization
        $(v_{\text{init}}, \theta_{\text{init}})$, convergence threshold $\epsilon_{\text{tol}}$,
        coefficient $\alpha>0$ for the quadratic term in \eqref{eq:strict}, regularization
        coefficient $\tau$, learning rate $\eta$.

        \STATE Initialize the value and parameters $v = v_{\text{init}}$, $\theta = \theta_{\text{init}}$.

        \STATE Calculate $u_{sa} = \exp(\theta_{sa})$, $(s, a)\in S\times A$. 

        \STATE Set $q = 1 + \epsilon_{\text{tol}}$.

        \WHILE {$q > \epsilon_{\text{tol}}$}
        
        \STATE Calculate $(v_{\text{new}})_{s'} = (1-\eta)v_{s'} + \frac{\eta}{\alpha} \sum_{sa} K_{ass'} u_{sa}, \quad s'\in S$. 

        \STATE Update $\theta$ by \[\theta_{sa}\leftarrow (1-\eta)\theta_{sa}+\eta\log\sum_a u_{sa}
        + \frac{\eta}{\tau} \left(r_{sa}-\sum_{s'} K_{ass'} (v_{\text{new}})_{s'}\right), ~(s, a)\in
        S\times A.\]

        \STATE Calculate $(u_{\text{new}})_{sa} = \exp(\theta_{sa})$, $(s, a)\in S\times A$. 

        \STATE Calculate $q = \max\{\|v_{\text{new}}-v\| / \|v\|, \|u_{\text{new}}-u\| / \|u\|\}$.
        \STATE Update $(v, u)$ by $v\leftarrow v_{\text{new}}$, $u\leftarrow u_{\text{new}}$. 
        \ENDWHILE
    \end{algorithmic}
    \vspace{0.5em}
\end{algorithm}

\section{Interpolating natural gradient method}\label{sec:metric}

In \cref{sec:ANG}, NGAD is introduced using the diagonal part of
$\frac{\p^2 E}{\p u^2}$. A natural question is whether the whole matrix
$\frac{\p^2 E}{\p u^2}$ can be used.  Under the matrix notation,
$\frac{\p^2 E}{\p u^2}$ in \eqref{eq:hessian} takes the form
\eq{\label{eq:uhess} \frac{\p^2
    E}{\p u^2}=
\begin{bmatrix}
H_1&&\\
&\ddots&\\
&&H_{|S|}
\end{bmatrix},\quad
H_s = \diag{\left((u_{s\cdot})^{-1}\right)}-\frac{1}{\bu_s}\mathbf{1}_{|A|}\mathbf{1}_{|A|}^\T, \quad s\in S.
}
Since the Hessian matrix describes the local geometry of the problem, the standard NGAD in
Section~\cref{sec:ANG} can be viewed as approximating the Hessian diagonally 
\[
\frac{\p^2 E}{\p u^2}\approx
\begin{bmatrix}
\diag{\left((u_{1\cdot})^{-1}\right)}&&\\
&\ddots&\\
&&\diag{\left((u_{|S|\cdot})^{-1}\right)}
\end{bmatrix}
\]
and using its inverse 
\[
\begin{bmatrix}
\diag(u_{1\cdot})&&\\
&\ddots&\\
&& \diag(u_{|S|\cdot})
\end{bmatrix}
\equiv
\begin{bmatrix}
\bu_1 (\diag(\pi_{1\cdot}))&&\\
&\ddots&\\
&&\bu_{|S|}  (\diag(\pi_{|S|\cdot}))
\end{bmatrix}
\]
to precondition the gradient.  
However, $H_s$ is in
fact singular with $\operatorname{Null}(H_s) = \operatorname{Span}(u_{s\cdot})$ and its
pseudoinverse reads
\[
\begin{bmatrix}
\bu_1 (\diag(\pi_{1\cdot}) - \pi_{1\cdot}\pi_{1\cdot}^\T)&&\\
&\ddots&\\
&&\bu_{|S|}  (\diag(\pi_{|S|\cdot}) - \pi_{|S|\cdot}\pi_{|S|\cdot}^\T)
\end{bmatrix}.
\]
If we had constructed the natural gradient method with this pseudoinverse, the component in the $\mathbf{1}_{|A|}$
direction would not have been updated in the dynamics.


The key idea is that one can interpolate between these two extreme cases, i.e., we propose
to use
\begin{equation}\label{eq:newmetric}
  \begin{bmatrix}
    \bu_1 (\diag(\pi_{1\cdot}) - c \pi_{1\cdot}\pi_{1\cdot}^\T)&&\\
    &\ddots&\\
    &&\bu_{|S|}  (\diag(\pi_{|S|\cdot}) - c \pi_{|S|\cdot}\pi_{|S|\cdot}^\T)
  \end{bmatrix}
\end{equation}
for $0 < c < 1$ to precondition the gradient.

Under this interpolating metric \eqref{eq:newmetric}, the new {\em interpolating} NGAD (INGAD) is given
by
\eq{\label{eq:ANGflow2}
\begin{aligned}
  \dt{v_{s'}} & = - \left(v_{s'} - \frac{1}{\alpha} \sum_{sa} K_{ass'} u_{sa}\right), \quad s'\in S,\\
  \dt{u_{s\cdot}} & = - \bu_s  \left(\diag(\pi_{s\cdot})-c\pi_{s\cdot}\pi_{s\cdot}^\T\right)  \left(\log \frac{u_{s\cdot}}{\bu_{s}} - \frac{1}{\tau}\left(r_{s\cdot}-\sum_{s'} K_{\cdot ss'} v_{s'}\right) \right), \quad s\in S,
\end{aligned}
}
where $u_{s\cdot}\in\mathbb{R}^{|A|}$. When $c=0$, this dynamics reduces to \eqref{eq:ANGflow1}. 

A Lyapunov function of this dynamics can also be identified. Using the unique solution $(v^*,u^*)$ to \eqref{eq:strict}, we define 
\eq{\label{eq:lyap2} L_c(v,u) =
  \frac{\alpha}{2}\sum_{s} |v_{s}-v^*_{s }|^2 + \tau \left( \sum_{sa} \left(u^*_{sa}\log
  \frac{u^*_{sa}}{u_{sa}} + u_{sa}-u^*_{sa}\right)
  + \frac{c}{1-c} \sum_s \left(\bu^*_{s}\log \frac{\bu^*_{s}}{\bu_{s}} +
  \bu_{s}-\bu^*_{s}\right) \right),
}
where the subscript $c$ denotes the hyperparameter in the function. Some key properties
of $L_c(v,u)$ are summarized in the following lemma.
\begin{lemma}
  \label{lemma:Lc}
  $L_c(v,u)$ is convex and the unique minimum is $L_c(v^*,u^*) = 0$. The sublevel sets of $L_c$ are
  bounded.
\end{lemma}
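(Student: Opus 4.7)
The plan is to decompose $L_c$ into three nonnegative convex pieces and verify each of the three claims (convexity, unique zero minimum, coercive sublevel sets) separately, mirroring the treatment of $L$ in \Cref{lemma:L}. The key auxiliary object is the one-dimensional Bregman-type function $\phi(x; x^*) := x^* \log(x^*/x) + x - x^*$ defined for $x > 0$ with parameter $x^* > 0$. A standard elementary calculation shows $\phi(\cdot; x^*) \geq 0$ with equality iff $x = x^*$ (the one-dimensional Gibbs inequality), its second derivative equals $x^*/x^2 > 0$ so it is strictly convex, and it is coercive: $\phi(x; x^*) \to +\infty$ both as $x \to 0^+$ (via $-\log x$) and as $x \to +\infty$ (via the linear term). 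With this notation, $L_c$ splits as
\[
\tfrac{\alpha}{2}\|v - v^*\|^2 \;+\; \tau \sum_{sa} \phi(u_{sa}; u^*_{sa}) \;+\; \tau \cdot \tfrac{c}{1-c} \sum_s \phi(\bu_s; \bu^*_s).
\]

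For convexity and the unique minimum, the first piece is a positive-definite quadratic in $v$, hence strictly convex; the second is a sum over $(s,a)$ of strictly convex functions of the individual coordinates $u_{sa}$, hence strictly convex in $u$; and the third is $\phi(\cdot; \bu^*_s)$ precomposed with the linear map $u \mapsto \bu_s = \sum_a u_{sa}$, which preserves convexity, and is then scaled by the nonnegative factor $c/(1-c)$ (here I would use $c \in (0,1)$). The sum is therefore convex, and since every piece is nonnegative we have $L_c \geq 0$. Setting $L_c(v, u) = 0$ forces $v = v^*$ from the quadratic and $u_{sa} = u^*_{sa}$ for every $(s,a)$ from the second piece; the third then vanishes automatically since $\bu_s = \bu^*_s$ when $u = u^*$. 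Hence $(v^*, u^*)$ is the unique minimizer with value zero.

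For boundedness of the sublevel set $\{L_c \leq M\}$, nonnegativity of the three pieces gives $\|v - v^*\|^2 \leq 2M/\alpha$ and $\phi(u_{sa}; u^*_{sa}) \leq M/\tau$ for each $(s,a)$ individually. The first inequality traps $v$ in a ball around $v^*$, and the coercivity of $\phi(\cdot; u^*_{sa})$ at both $0^+$ and $+\infty$ confines each $u_{sa}$ to a bounded subinterval of $(0,\infty)$, so $u$ lies in a bounded subset of the positive orthant. I do not foresee a substantive obstacle here; the only mild subtlety worth flagging is the sign of the coefficient $c/(1-c)$, which is nonnegative precisely because $c \in (0, 1)$, so the extra third piece cooperates with the second rather than fighting it.
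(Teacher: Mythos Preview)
Your proof is correct and takes a somewhat different route from the paper's. The paper computes the Hessian of $L_c$ explicitly, writes it as a block-diagonal matrix with the $(u,u)$ block equal to $\tau\diag(u^*/u^2)+\frac{c\tau}{1-c}B$ for a positive semi-definite rank-one-per-block matrix $B$, and concludes positive definiteness (hence strict convexity); the unique minimum then follows from the first-order condition. For boundedness the paper writes $L_c=L_0+\frac{c\tau}{1-c}\ell(u)$ with $\ell\ge 0$ and invokes \Cref{lemma:L}. Your decomposition into the quadratic plus two layers of one-dimensional Bregman divergences $\phi(\cdot\,;x^*)$ sidesteps the Hessian calculation entirely and gives convexity, nonnegativity, the equality case, and coercivity all from elementary properties of $\phi$. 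The tradeoff: your argument is cleaner and self-contained, while the paper's Hessian computation produces explicit matrices that are reused verbatim in the linearization analysis of \Cref{thm:linrate2}, so for the paper's purposes the extra bookkeeping is not wasted.
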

The next lemma states that $L_c(v,u)$ is a Lyapunov function for \eqref{eq:ANGflow2}.
\begin{lemma}
  \label{lemma:lyap2}
  $L_c(v,u)$ is a Lyapunov function for the dynamics \eqref{eq:ANGflow2}, i.e., $\dt{L_c}\leq0$ when
  $\dt{v}$ and $\dt{u}$ are defined by \eqref{eq:ANGflow2}, and the only trajectory of the dynamics
  \eqref{eq:ANGflow2} satisfying $\dt{L_c}=0$ is $(v, u) = (v^*, u^*)$.
\end{lemma}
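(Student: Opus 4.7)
The plan is to adapt the proof of Lemma 2.3 to handle the new off-diagonal piece $-c\pi_{s\cdot}\pi_{s\cdot}^\T$ in the preconditioner. The extra term $\frac{c}{1-c}\sum_s \bigl(\bu^*_s \log(\bu^*_s/\bu_s) + \bu_s - \bu^*_s\bigr)$ in $L_c$ is engineered precisely to cancel the additional contribution produced by this off-diagonal piece, restoring the same favorable bilinear structure as the $c=0$ case.

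First I would compute $\partial L_c/\partial v_{s'} = \alpha(v_{s'} - v^*_{s'})$ and $\partial L_c/\partial u_{sa} = \tau(1 - u^*_{sa}/u_{sa}) + \tau\tfrac{c}{1-c}(1 - \bu^*_s/\bu_s)$. Introducing the shorthand $g_{sa} := \log(u_{sa}/\bu_s) - \tau^{-1}\bigl(r_{sa} - \sum_{s'} K_{ass'} v_{s'}\bigr)$, the $u$-dynamics in \eqref{eq:ANGflow2} takes the clean form $\dot u_{sa} = -u_{sa} g_{sa} + c\, u_{sa}\sum_{a'}\pi_{sa'}g_{sa'}$, and summing over $a$ gives $\dot{\bu}_s = -(1-c)\bu_s \sum_a \pi_{sa}g_{sa}$.

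Plugging these into $\dot L_c = \nabla_v L_c \cdot \dot v + \nabla_u L_c \cdot \dot u$, the off-diagonal piece in $\dot u_{sa}$ paired with the $(1 - u^*_{sa}/u_{sa})$ factor yields $+\tau c\sum_s (\bu_s - \bu^*_s)\sum_{a'}\pi_{sa'}g_{sa'}$, while the $\tfrac{c}{1-c}$ piece of $\nabla_u L_c$ paired with $\dot{\bu}_s$ yields $-\tau c\sum_s (\bu_s - \bu^*_s)\sum_a\pi_{sa}g_{sa}$. These cancel exactly. Using the KKT relations from Theorem 2.1(b), namely $\alpha v^*_{s'} = \sum_{sa} K_{ass'}u^*_{sa}$ and $\tau \log(u^*_{sa}/\bu^*_s) = r_{sa} - \sum_{s'}K_{ass'}v^*_{s'}$, the bilinear cross terms between $v - v^*$ and $u - u^*$ also cancel, and what remains (after recognizing two KL divergences) is
\[
\frac{\mathrm{d} L_c}{\mathrm{d} t} = -\alpha\,\|v - v^*\|^2 - \tau\sum_s \Bigl(\bu_s\,\KL(\pi_{s\cdot}\|\pi^*_{s\cdot}) + \bu^*_s\,\KL(\pi^*_{s\cdot}\|\pi_{s\cdot})\Bigr) \le 0.
\]

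For the equality clause, $\dot L_c = 0$ forces $v = v^*$ (from the quadratic term) and $\pi_{s\cdot} = \pi^*_{s\cdot}$ for every $s$ (each KL divergence is strictly positive unless the distributions coincide, and $\bu^*_s > 0$ at the regularized optimum). On this set $g_{sa} = 0$ by the KKT conditions, so $\dot u \equiv 0$; however $\dot v_{s'} = \alpha^{-1}\sum_{sa}K_{ass'}(u_{sa} - u^*_{sa})$, and writing $u_{sa} = \bu_s\pi^*_{sa}$ on the set, $\dot v = 0$ becomes $(\bu - \bu^*)^\T K_{\pi^*} = 0$. Invertibility of $K_{\pi^*} = I - \gamma P_{\pi^*}$ (since $\gamma < 1$) forces $\bu = \bu^*$, so the only trajectory on which $\dot L_c \equiv 0$ is $(v^*, u^*)$, as required by the Barbashin-Krasovskii-LaSalle setup used for Theorem 2.4.

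The main obstacle is the cancellation step in the third paragraph: the specific weight $c/(1-c)$ is not obvious a priori, and one must verify that its interaction with the contraction formula $\dot{\bu}_s = -(1-c)\bu_s\sum_a\pi_{sa}g_{sa}$ precisely eliminates the new off-diagonal contribution. Once this identity is in hand, the rest of the argument is a direct parallel of the proof of Lemma 2.3, with the added invariance check to deduce $\bu = \bu^*$ from the constraint $\dot v \equiv 0$.
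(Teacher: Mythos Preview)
Your proposal is correct and follows essentially the same route as the paper. The only cosmetic difference is packaging: the paper compresses your coordinate-wise cancellation into the single matrix identity $\bigl(\diag(1/\pi_{s\cdot}) + \tfrac{c}{1-c}\mathbf{1}\mathbf{1}^\T\bigr)\bigl(\diag(\pi_{s\cdot}) - c\,\pi_{s\cdot}\pi_{s\cdot}^\T\bigr)=I$, and it expresses the resulting nonpositive term via the convexity lemma (Lemma~\ref{lemma:helpconvex}) rather than as the symmetrized KL sum you wrote, but the content and the invariance argument for the equality case are identical.
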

The proofs of these two lemmas can be found again in \cref{sec:proofs}.

\begin{theorem}
\label{thm:conv2}
The dynamics of \eqref{eq:ANGflow2} converges globally to $(v^*,u^*)$.
\end{theorem}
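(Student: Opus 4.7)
The plan is to mirror the convergence argument used for \Cref{thm:conv1}, since the structural ingredients needed are already provided by \Cref{lemma:Lc} and \Cref{lemma:lyap2}. Specifically, I would invoke the Barbashin-Krasovskii-LaSalle invariance principle \cite{haddad2011nonlinear} with $L_c$ playing the role that $L$ played previously.

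First, I would verify that the hypotheses of the Barbashin-Krasovskii-LaSalle theorem apply to the dynamics \eqref{eq:ANGflow2}. \Cref{lemma:Lc} gives that $L_c(v,u)\geq 0$, that $L_c$ attains its unique minimum at $(v^*,u^*)$ with value zero, and crucially that every sublevel set is bounded, so the trajectories stay in a compact set. \Cref{lemma:lyap2} then provides the monotonicity $\dt{L_c}\leq 0$ along trajectories of \eqref{eq:ANGflow2}, together with the identification that the largest invariant subset of $\{\dt{L_c}=0\}$ reduces to the single point $(v^*,u^*)$.

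Combining these two facts with the invariance principle yields that every trajectory of \eqref{eq:ANGflow2} converges to $(v^*,u^*)$, regardless of the initialization. Since the argument is essentially structural once the Lyapunov function and its properties are in hand, I do not expect a genuine obstacle here; the proof can be completed in a couple of lines. The only subtlety worth a brief remark is that, as for \eqref{eq:ANGflow1}, one should confirm that the flow is well-defined globally in time and the state $(v,u)$ remains in the admissible region (in particular $u>0$ componentwise so that $\log(u_{sa}/\bu_s)$ is well-defined); this follows from the boundedness of sublevel sets of $L_c$ together with the multiplicative structure of the $u$-update in \eqref{eq:ANGflow2}, which keeps $u$ strictly positive.

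In summary, the proof will read: by \Cref{lemma:Lc}, $L_c$ is a proper nonnegative function with bounded sublevel sets, and by \Cref{lemma:lyap2}, $L_c$ is a Lyapunov function for \eqref{eq:ANGflow2} whose only invariant trajectory in $\{\dt{L_c}=0\}$ is the equilibrium $(v^*,u^*)$; hence by the Barbashin-Krasovskii-LaSalle theorem the dynamics \eqref{eq:ANGflow2} is globally asymptotically stable, proving the claim.
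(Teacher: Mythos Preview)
Your proposal is correct and matches the paper's own proof essentially verbatim: the paper also invokes \Cref{lemma:Lc}, \Cref{lemma:lyap2}, and the Barbashin--Krasovskii--LaSalle theorem to conclude global asymptotic stability, in direct analogy with \Cref{thm:conv1}. Your additional remarks on well-posedness and positivity of $u$ are reasonable but go slightly beyond what the paper spells out.
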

\begin{proof}
Similar to \Cref{thm:conv1}, by \Cref{lemma:Lc}, \Cref{lemma:lyap2} and the
Barbashin-Krasovskii-LaSalle theorem \cite{haddad2011nonlinear}, the dynamics of \eqref{eq:ANGflow2}
is globally asymptotically stable and hence converges globally to $(v^*, u^*)$.
\end{proof}

The local exponential convergence of \eqref{eq:ANGflow2} can also be shown with Lyapunov's indirect
method. This result is stated in \Cref{thm:linrate2}.
\begin{theorem}
\label{thm:linrate2}
  The dynamics of \eqref{eq:ANGflow2} converges at rate $O(e^{-ct})$ to $(v^*,u^*)$ for some $c>0$.
\end{theorem}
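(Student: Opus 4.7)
The plan is to apply Lyapunov's indirect method, mirroring the proof of \Cref{thm:linrate} but tracking the hyperparameter $c$ throughout. First I would linearize \eqref{eq:ANGflow2} about the unique equilibrium $(v^*,u^*)$ supplied by \Cref{thm:equi}. Writing $\delta v = v-v^*$ and $\delta u = u-u^*$, the $v$-block gives
\[
\delta\dot v = -\delta v + \tfrac{1}{\alpha} K^\T \delta u.
\]
Since the bracket $\log(u_{s\cdot}/\bu_s) - \tfrac{1}{\tau}(r_{s\cdot} - K_{\cdot s\cdot} v)$ vanishes at the equilibrium, the linear part of the $u$-dynamics is simply the preconditioner at $u^*$ acting on the derivative of the bracket. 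A direct computation using $u^*_{sa} = \bu^*_s \pi^*_{sa}$ yields the clean algebraic identity
\[
\bu^*_s\bigl(\diag(\pi^*_{s\cdot}) - c\,\pi^*_{s\cdot}(\pi^*_{s\cdot})^\T\bigr)\bigl(\diag(1/u^*_{s\cdot}) - \bu^{*-1}_s\mathbf{1}\mathbf{1}^\T\bigr) = I - \pi^*_{s\cdot}\mathbf{1}^\T,
\]
which is \emph{independent of $c$}. Consequently the diagonal $(u,u)$ block of the Jacobian coincides with the $c=0$ case, while only the $(u,v)$ cross-block retains the $c$-dependent factor $M^c_s := \bu^*_s(\diag(\pi^*_{s\cdot}) - c\,\pi^*_{s\cdot}(\pi^*_{s\cdot})^\T)$.

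Second, I would take the Hessian of $L_c$ at $(v^*,u^*)$ as a quadratic Lyapunov function $W$ for the linearized system. By \Cref{lemma:Lc}, $L_c$ is strictly convex with minimum at $(v^*,u^*)$, and since $\pi^*>0$ under entropy regularization, $W$ is positive definite for every $c\in[0,1)$. It has no $v$--$u$ coupling, an $\alpha I$ block in $v$, and a $u$-block of the form $\tau\bigl[\diag(1/u^*_{s\cdot}) + \tfrac{c}{1-c}\bu^{*-1}_s\mathbf{1}\mathbf{1}^\T\bigr]$; the weight $\tfrac{c}{1-c}$ in $L_c$ is calibrated precisely so that $W$ is dual to $M^c_s$ on the relevant subspace. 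Computing $dW/dt$ along the linearized flow then mirrors the algebra used to prove \Cref{lemma:lyap2} restricted to second order: the $\delta v$--$\delta u$ cross terms cancel exactly, leaving $-\alpha\|\delta v\|^2$ plus a negative definite Bregman-like quadratic in $\delta u$. By the matrix Lyapunov equation this certifies that the Jacobian is Hurwitz; Lyapunov's indirect theorem then yields local exponential stability at some rate $c'>0$, and combining with the global convergence from \Cref{thm:conv2} delivers the claimed $O(e^{-ct})$ global rate.

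The main technical obstacle is verifying that the quadratic form $-dW/dt$ is strictly positive definite rather than merely positive semidefinite. The delicate point is the direction $\pi^*_{s\cdot}$ within each $u$-block: $M^c_s$ degenerates along $\mathbf{1}$ as $c\to 1$, while $L_c$ compensates through its singular $\tfrac{c}{1-c}$ prefactor. For $c\in(0,1)$ strict, both are finite and the cancellation is exact, but pinning down an explicit strict lower bound (and hence an explicit rate) requires a careful eigenvalue comparison of two coupled block matrices. If the direct algebra becomes unwieldy, a viable fallback is to first establish the result at $c=0$ (recovering \Cref{thm:linrate}) and then extend it by a continuity/perturbation argument in $c$, exploiting continuous dependence of Jacobian eigenvalues on the parameter together with compactness of $[0,c_{\max}]\subset(0,1)$.
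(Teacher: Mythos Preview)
Your approach is a legitimate reformulation of the paper's argument, and your algebraic identity $G_sM_s = I - \pi^*_{s\cdot}\mathbf{1}^\T$ (with $G_s=\bu_s^*(\diag\pi^*_{s\cdot}-c\,\pi^*_{s\cdot}(\pi^*_{s\cdot})^\T)$ and $M_s=\diag(1/u^*_{s\cdot})-(\bu^*_s)^{-1}\mathbf{1}\mathbf{1}^\T$) is correct. In fact your quadratic $W=\nabla^2 L_c(v^*,u^*)$ equals $\alpha\tau\,Q^{-2}$ for the paper's similarity matrix $Q=\diag(\sqrt{\tau}I,\sqrt{\alpha}\,G^{1/2})$, so your Lyapunov inequality $J^\T W+WJ\succeq 0$ is precisely the paper's computation $\tilde J+\tilde J^\T\succeq 0$ written in congruent coordinates. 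The cross terms do cancel, and one obtains $-\dot W=2\alpha\|\delta v\|^2+2\tau\,\delta u^\T M\,\delta u$.

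The gap is your claim that the $\delta u$ part is negative \emph{definite}. It is not: $M$ is only positive \emph{semi}definite, with blockwise null space $\spanop(u^*_{s\cdot})$. Hence the matrix Lyapunov equation alone does not certify Hurwitz stability, and neither proposed fix works. There is no strict lower bound to extract, since $-\dot W$ genuinely vanishes on $\{0\}\times\ker M$; and the continuity-in-$c$ fallback is circular, because compactness of $[0,c_{\max}]$ would yield a uniform rate only after you already know the spectrum avoids the imaginary axis for every $c$ in that interval.

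What is missing is an observability-type step, which is exactly what the paper supplies. Suppose $(\lambda,x)$ is an eigenpair of $-J$ with $\operatorname{Re}\lambda=0$. The semidefinite Lyapunov inequality forces $x$ into the null space of the symmetric part, i.e.\ $\delta v=0$ and $(\delta u)_{s\cdot}=c_s u^*_{s\cdot}$ for some scalars $c_s$. Substituting into the $v$-row of $-Jx=\lambda x$ gives $\sum_{sa}K_{ass'}c_s u^*_{sa}=\sum_s(K_{\pi^*})_{ss'}c_s\bu^*_s=0$, and nonsingularity of $K_{\pi^*}=I-\gamma P_{\pi^*}$ forces all $c_s=0$, contradicting $x\neq 0$. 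Adding this argument (or equivalently checking observability of $(J,R^{1/2})$ with $R=\diag(2\alpha I,2\tau M)$) closes the proof; without it the proposal does not establish exponential convergence.
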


Finally, we discuss the implementation of \eqref{eq:ANGflow1}. By letting $u_{sa} = \exp(\theta_{sa})$,
\eqref{eq:ANGflow2} can be written as
\eq{\label{eq:ANGflow2_p}
\begin{aligned}
  \dt{v_{s'}} & = - \left(v_{s'} - \frac{1}{\alpha} \sum_{sa} K_{ass'} \exp(\theta_{sa})\right), \quad s'\in S,\\
  \dt{\theta_{s\cdot}} & = - \left(I-\frac{c\mathbf{1}\exp(\theta_{s\cdot})^\T}{\mathbf{1}^\T\exp(\theta_{s\cdot})}\right)  
  \left(\theta_{s\cdot}-\log\sum_a\exp(\theta_{sa})\mathbf{1} - \frac{1}{\tau}\left(r_{s\cdot}-\sum_{s'} K_{\cdot ss'} v_{s'}\right) \right), ~ s\in S.
\end{aligned}
}
With a learning rate $\eta>0$, this becomes
\eq{\label{eq:ANGflow2_d}
\begin{aligned}
  v_{s'} & \leftarrow (1-\eta)v_{s'} + \frac{\eta}{\alpha} \sum_{sa} K_{ass'} \exp(\theta_{sa}), \quad s'\in S,  \\
  \theta_{s\cdot} & \leftarrow \theta_{s\cdot}-\eta\left(I-\frac{c\mathbf{1}\exp(\theta_{s\cdot})^\T}{\mathbf{1}^\T\exp(\theta_{s\cdot})}\right)  
  \left(\theta_{s\cdot}-\log\sum_a\exp(\theta_{sa})\mathbf{1} - \frac{1}{\tau}\left(r_{s\cdot}-\sum_{s'} K_{\cdot ss'} v_{s'}\right) \right), ~s\in S.
\end{aligned}
}
The details of the algorithm can be found in \Cref{alg:ANG2} below. 
\begin{algorithm}[htbp]
    \caption{INGAD for quadratically convexified formulation} 
    \label{alg:ANG2}
    \begin{algorithmic}[1]
        \REQUIRE the MDP model $\mathcal{M} = (S, A, P, r, \gamma)$, initialization $(v_{\text{init}}, \theta_{\text{init}})$, convergence threshold $\epsilon_{\text{tol}}$, coefficient $\alpha>0$ for the quadratic term in \eqref{eq:strict},
        regularization coefficient $\tau$, metric coefficient $0\leq c<1$, learning rate $\eta$.
        \STATE Initialize the value and parameters $v = v_{\text{init}}$, $\theta = \theta_{\text{init}}$.
        \STATE Calculate $u_{sa} = \exp(\theta_{sa})$, $(s, a)\in S\times A$. 
        \STATE Set $q = 1 + \epsilon_{\text{tol}}$.
        \WHILE {$q > \epsilon_{\text{tol}}$}
        
        \STATE Calculate $(v_{\text{new}})_{s'} = (1-\eta)v_{s'} + \frac{\eta}{\alpha} \sum_{sa} K_{ass'} u_{sa}, \quad s'\in S$. 
        \STATE Update $\theta$ by \[\theta_{s\cdot}\leftarrow\theta_{s\cdot}-\eta\left(I-\frac{c\mathbf{1}u_{s\cdot}^\T}{\mathbf{1}^\T u_{s\cdot}}\right)
        \left(\theta_{s\cdot}-\left(\log\sum_au_{sa}\right)\mathbf{1} - \frac{1}{\tau}\left(r_{s\cdot}-\sum_{s'} K_{\cdot ss'} (v_{\text{new}})_{s'}\right) \right), ~s\in S.\]

        \STATE Calculate $(u_{\text{new}})_{sa} = \exp(\theta_{sa})$, $(s, a)\in S\times A$. 

        \STATE Calculate $q = \max\{\|v_{\text{new}}-v\| / \|v\|, \|u_{\text{new}}-u\| / \|u\|\}$.
        \STATE Update $(v, u)$ by $v\leftarrow v_{\text{new}}$, $u\leftarrow u_{\text{new}}$. 
        \ENDWHILE
    \end{algorithmic}
    \vspace{0.5em}
\end{algorithm}

\section{Numerical results}\label{sec:numerical}
In this section, we examine the performance of \Cref{alg:ANG1} and \Cref{alg:ANG2} with several different examples. \Cref{sec:exp1} compares \Cref{alg:ANG1} and \Cref{alg:ANG2} in a complete-information case where the transition probabilities and the rewards are known exactly. A comparison with an existing method in \cite{zhan2021policy} is showcased in this setting as well. The sample-based setting is investigated in \cref{sec:exp3}, where we give an adapted version of INGAD with sample access, and test its performance on two different MDPs.

\subsection{Experiments with complete information}\label{sec:exp1}
Here we test the numerical performance of the standard natural gradient in \Cref{alg:ANG1} and the interpolating natural gradient in \Cref{alg:ANG2} in a complete information situation. The MDP used is from \cite{zhan2021policy}, where $|S| = 200$, $|A| = 50$, and the transition probabilities and rewards are randomly generated. More specifically, the transition probabilities are set as $P_{ass'} = 1/20$ for any $s'\in S_{sa}$, where $S_{sa}$ is a uniformly randomly chosen subset of $S$ such that $|S_{sa}| = 20$, and the reward $r_{sa} =U_{sa}U_s$ for $(s,
a)\in S\times A$, where $U_{sa}$ and $U_s$ are independently uniformly sampled from $[0,1]$.

A comparison of \Cref{alg:ANG1} and \Cref{alg:ANG2} is carried out using the same discount rate $\g = 0.99$ and hyperparameters $(\epsilon_{\text{tol}}, \alpha, \tau) = (1\times10^{-5}, 0.1, 0.01)$. Since both algorithms are explicit discretizations of the corresponding flow, a sufficiently small learning
rate is needed to ensure convergence. In the tests, the learning rates are set as
$\eta=3\times10^{-4}$ for \Cref{alg:ANG1} and $\eta=8\times10^{-3}$ for \Cref{alg:ANG2}, which are both manually tuned to be close to the largest learning rates such that convergence is achieved. For \Cref{alg:ANG2}, we set $c=0.98$. 

\begin{figure}[ht!]
  \centering
  \subfigure[Error of the value function and policy using \Cref{alg:ANG1}.\label{fig:vp1}]{
    \includegraphics[width=0.34\textwidth]{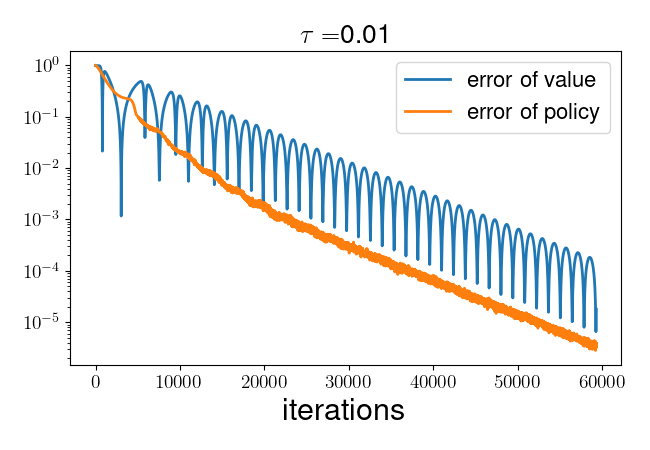} }
  \hspace{0em}
  \subfigure[The Lyapunov function. \label{fig:lyap1}]{
    \includegraphics[width=0.34\textwidth]{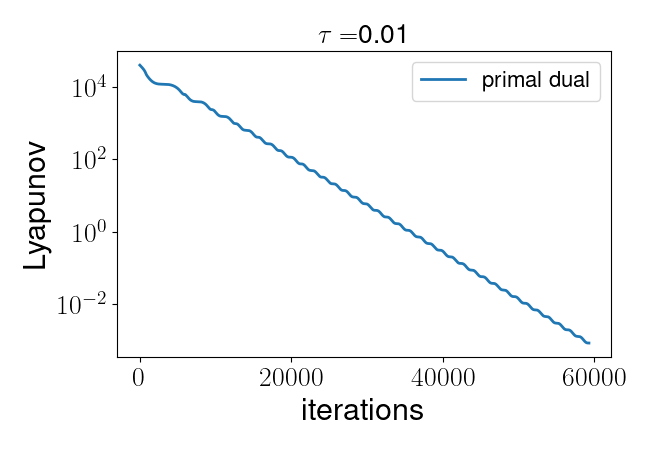} }
  \vspace{-0.5em}
  \subfigure[Error of the value function and policy using \Cref{alg:ANG2}.\label{fig:vp2}]{
    \includegraphics[width=0.34\textwidth]{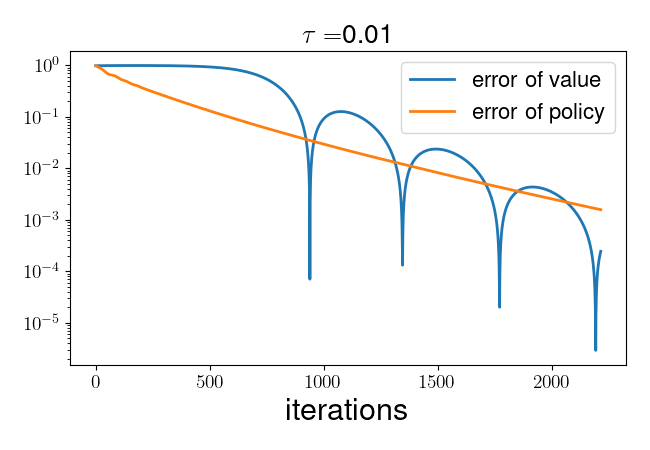} }
  \hspace{0em}
  \subfigure[The Lyapunov function. \label{fig:lyap2}]{
    \includegraphics[width=0.34\textwidth]{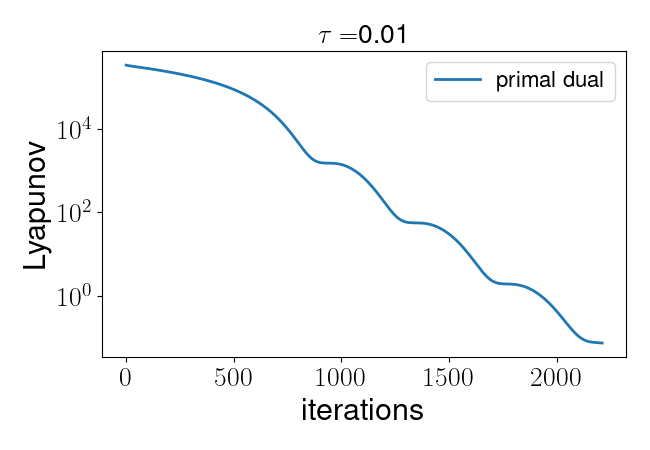} }
  \caption{Comparison of \Cref{alg:ANG1} and \Cref{alg:ANG2}. (a): Convergence of
    the value and policy during training of \Cref{alg:ANG1}; (b): Lyapunov function
    \eqref{eq:Lyap1}; (c): Convergence of the value and policy during training of
    \Cref{alg:ANG2}; (d): Lyapunov function \eqref{eq:lyap2}.  Blue curves in (a) and (c):
    The convergence of $\|\pi-\pi^*\|_F/\|\pi^*\|_F$ in the training process. Orange curves in (a)
    and (c): The convergence of $\|v-v^*\|_2/\|v^*\|_2$ in the training process. A logarithmic scale
    is used for all vertical axes.}
  \label{fig:comparealg}
\end{figure}
As a result, \Cref{alg:ANG1} takes $59296$ iterations to converge while \Cref{alg:ANG2} takes $2213$ iterations, demonstrating that the interpolating metric introduced in \cref{sec:metric} gives rise to an acceleration of more than $1$ magnitude. Plotted in \Cref{fig:vp1} and \Cref{fig:vp2} are the errors of the value and policy with respect to the ground truth in the training process, which verifies that \Cref{alg:ANG2} achieves the same precision more than a magnitude faster than \Cref{alg:ANG1}. Moreover, it can be observed from \Cref{fig:lyap1} and \Cref{fig:lyap2} that the Lyapunov function decreases monotonically in both cases, confirming the theoretical analyses in \cref{sec:strict} and \cref{sec:metric}.

\textbf{Comparison with PMD \cite{zhan2021policy}.}
Next, we compare the performance of \Cref{alg:ANG2} (INGAD) with an existing method, namely the policy mirror descent (PMD) method used in \cite{zhan2021policy}. The underlying MDP of the problem is the same as in
\cref{sec:exp1}. 
For the hyperparameters of INGAD, we take $(N_{\text{iter}}, \alpha, c)=(2000, 0.1, 0.98)$. In order to make a fair comparison, the learning rate is set as $\eta = 8\times 10^{-3}$, and the regularization coefficient is set as $\tau=0.01$ for both methods. For the PMD method, we take the first $20000$ iterations. 

\begin{figure}[ht]
  \centering
  \subfigure[Comparison of the error curves of the value function.\label{fig:vpgauss1}]{
    \includegraphics[width=0.34\textwidth]{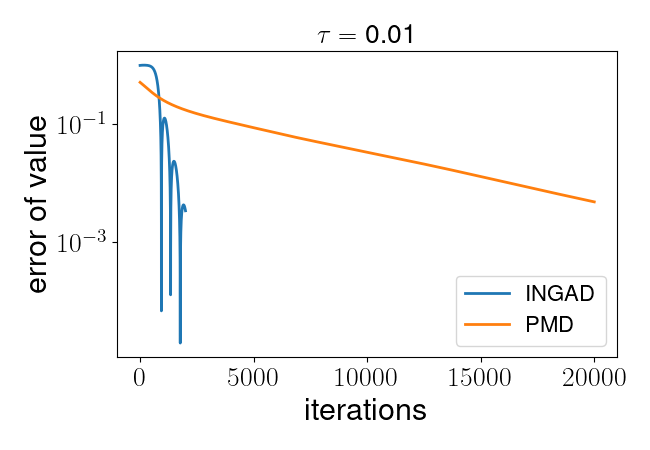} }
  \hspace{0em}
  \subfigure[Comparison of the error curves of the policy function.\label{fig:vpgauss2}]{
    \includegraphics[width=0.34\textwidth]{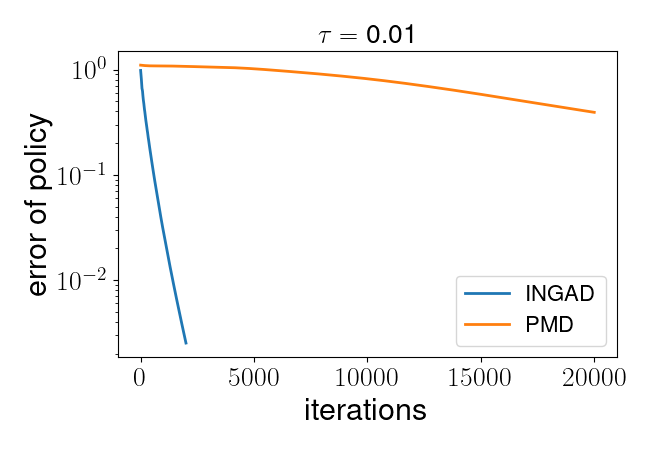} }
  \hspace{0em}
  \caption{Comparison of \Cref{alg:ANG2} with PMD \cite{zhan2021policy}. (a): Convergence
    of the value function; (b): Convergence of the policy. Blue
    curves: The convergence of $\|\pi-\pi^*\|_F/\|\pi^*\|_F$ in the training
    process. Orange curves: The convergence of $\|v-v^*\|_2/\|v^*\|_2$ in the
    training process. A logarithmic scale is used for all vertical axes.}
  \label{fig:randreward}
\end{figure}
It can be seen from \Cref{fig:randreward} that \Cref{alg:ANG2} admits a faster convergence than PMD. For both the value function and the policy, \Cref{alg:ANG2} achieves a higher precision in $2000$ iterations than PMD with $20000$ iterations. The final errors in the value function and policy are approximately $(0.0034, 0.0025)$ for INGAD and $(0.39, 0.0049)$ for PMD. 

\begin{algorithm}[ht!]
    \caption{INGAD for quadratically convexified formulation (sample version)} 
    \label{alg:ANG2_online}
    \begin{algorithmic}[1]
      \REQUIRE the discount rate $\gamma$, initialization $(v_{\text{init}}, \theta_{\text{init}})$,
      convergence threshold $\epsilon_{\text{tol}}$, maximum number of iterations $N_{\text{iter}}$, coefficient $\alpha>0$ for the quadratic term in \eqref{eq:strict},
      regularization coefficient $\tau$, metric coefficient $0\leq c<1$, the initial and the final learning rate $(\eta_{\text{init}}, \eta_{\text{end}})$.
      \STATE Initialize the value and parameters $v = v_{\text{init}}$, $\theta = \theta_{\text{init}}$.
      \STATE Calculate $u_{sa} = \exp(\theta_{sa})$, $(s, a)\in S\times A$. 
      \STATE Set $q = 1 + \epsilon_{\text{tol}}$ and $i=0$. 
      \STATE Initialize a buffer $\mathcal{B}$ with $N$ transition samples $(s, a, s', r)$.
      \WHILE {$q > \epsilon_{\text{tol}}$ and $i<N_{\text{iter}}$}
      \STATE Calculate $\eta_i = (1+i(N_{\text{iter}}\eta_{\text{end}})^{-1}(\eta_{\text{init}}-\eta_{\text{end}}))^{-1}\eta_{\text{init}}$
      \STATE Randomly sample a batch of samples from $\mathcal{B}$ with size $N_b$. 
      \STATE Estimate $\hat{K}^{(i)}$ from the samples. 
      \STATE Calculate $(v_{\text{new}})_{s'} = (1-\eta)v_{s'} + \frac{\eta}{\alpha} \sum_{sa} \hat{K}_{ass'}^{(i)} u_{sa}, \quad s'\in S$. 
      \STATE Update $\theta$ by \[\theta_{s\cdot}\leftarrow\theta_{s\cdot}-\eta\left(I-\frac{c\mathbf{1}u_{s\cdot}^\T}{\mathbf{1}^\T u_{s\cdot}}\right)
      \left(\theta_{s\cdot}-\left(\log\sum_au_{sa}\right)\mathbf{1} - \frac{1}{\tau}\left(r_{s\cdot}-\sum_{s'} \hat{K}_{\cdot ss'}^{(i)} (v_{\text{new}})_{s'}\right) \right), ~s\in S.\]
      
      \STATE Calculate $(u_{\text{new}})_{sa} = \exp((\theta_{\text{new}})_{sa})$, $(s, a)\in S\times A$. 
      
      \STATE Calculate $q = \max\{\|v_{\text{new}}-v\| / \|v\|, \|u_{\text{new}}-u\| / \|u\|\}$.
      \STATE Update $(v, u)$ by $v\leftarrow v_{\text{new}}$, $u\leftarrow u_{\text{new}}$. 
      \STATE $i\leftarrow i+1$.
      \ENDWHILE
    \end{algorithmic}
    \vspace{0.5em}
\end{algorithm}

\subsection{Experiments with random samples}\label{sec:exp3}
Finally, we test the INGAD algorithm on the case where the transition probabilities are unknown. In each iteration, a size-$N_b$ batch of samples is used to estimate the transition probabilities and used for the INGAD update, as presented in \Cref{alg:ANG2_online}. In order to stabilize the training dynamics, we use a decaying learning rate starting with $\eta_{\text{init}}$ and ending with $\eta_{\text{end}}$. If $\eta_{\text{init}}=\eta_{\text{end}}$, then the algorithm reduces to the constant learning rate case. We first use the MDP introduced in \cref{sec:exp1}. 

In this experiment, we adopt $(\gamma,N_{\text{iter}}, N_b, \alpha,\tau,c)=(0.9,12000, 1\times10^5, 0.1,0.1,0.9)$ and $(\eta_{\text{init}},\eta_{\text{end}})=(0.001, 0.001)$. Altogether $1\times10^8$ samples are used in the training process.
\begin{figure}[ht]
  \centering
  \subfigure[Error of the value function and policy.\label{fig:vponline}]{
    \includegraphics[width=0.34\textwidth]{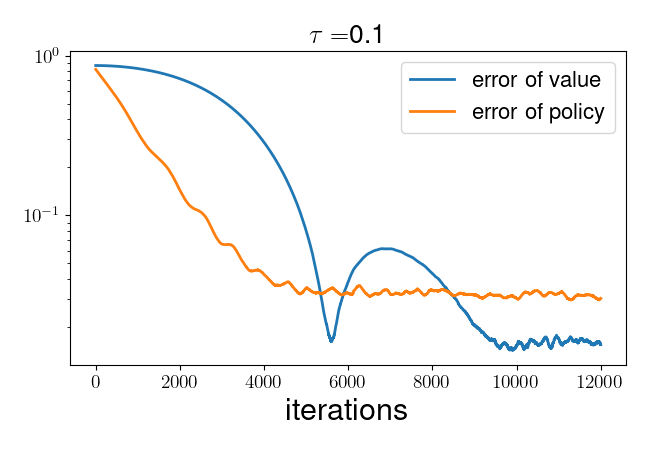} }
  \hspace{0em}
  \subfigure[The Lyapunov function. \label{fig:lyaponline}]{
    \includegraphics[width=0.34\textwidth]{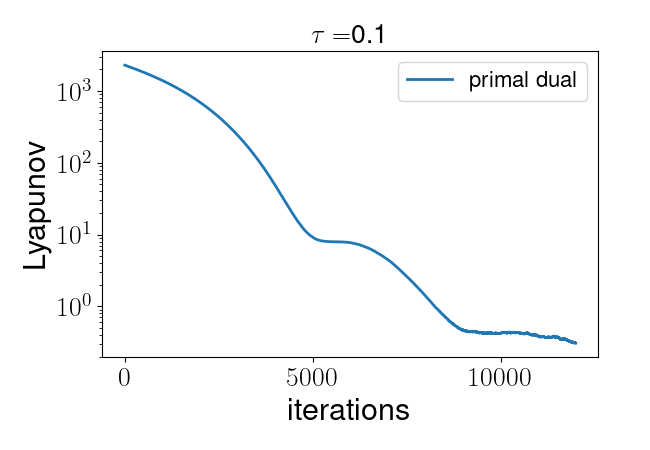} }
  \caption{Performance of \Cref{alg:ANG2_online} for the MDP problem described in \Cref{sec:exp1}. (a): Convergence of the value and policy
    during training of \Cref{alg:ANG2_online}; Blue curve: the convergence of
    $\|\pi-\pi^*\|_F/\|\pi^*\|_F$ in the training process; Orange curve: the convergence of
    $\|v-v^*\|_2/\|v^*\|_2$ in the training process. (b): Lyapunov function
    \eqref{eq:lyap2}. A logarithmic scale is used for all vertical axes.}
  \label{fig:online}
\end{figure}
It can be seen from \Cref{fig:online} that the approximate value function and policy given by 
\Cref{alg:ANG2_online} converge to the ground truth and oscillate around it at the final
stage. The final errors in the value function and policy are approximately $0.015$ and $0.030$,
respectively. It can also be seen from \Cref{fig:lyaponline} that the Lyapunov function mostly
decreases in the training process even though the transition probabilities used are just unbiased
estimators of the ground truth. 

\textbf{Experiment with the FrozenLake environment.}
In this part, the MDP we consider is from the FrozenLake environment (see \cite{brockman2016openai}). The environment describes the problem where the player aims to walk on a frozen lake from one corner to another without falling into the holes. In the example we use below, the map is an $8\times 8$ square grid with $10$ randomly generated holes. Therefore, the size of the state space is $64$, and there are $4$ actions, corresponding to the $4$ directions one can choose at each position. In order to model the low-friction property of ice, the transition is not deterministic. More specifically, the agent has a $1/3$ probability of moving in the intended direction or the two perpendicular directions. An illustration of the lake map is given in \Cref{fig:map}.

\begin{figure}[ht!]
\centering
  \includegraphics[scale=0.16]{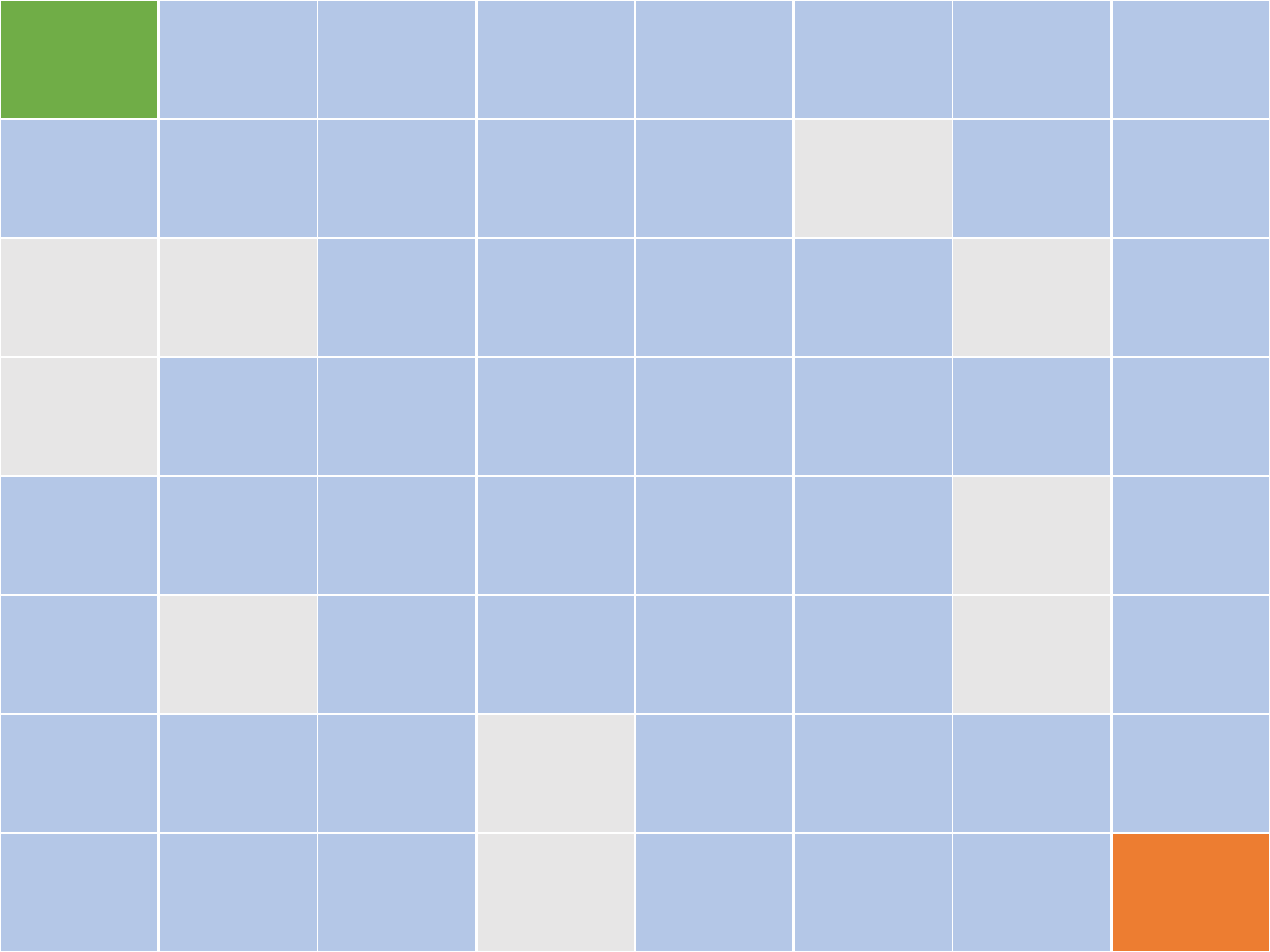}
  \caption{Map of the FrozenLake environment with size $8\times8$ and $10$ randomly generated holes. The green and the orange boxes represent the starting position and the target position, respectively. The blue area represents the positions with ice, while the grey spots indicate the positions of holes.}
  \label{fig:map}
\end{figure}

In the numerical experiment, we set
$(\gamma,N_{\text{iter}}, N_b,\alpha,\tau,c)=(0.9,80000,2000,0.1,0.1,0.9)$, and $(\eta_{\text{init}},\eta_{\text{end}})=(0.002,0.0002)$. The buffer size $N$ and
the batch-size $N_b$ are chosen as $2\times10^6$ and $2000$, respectively.
\begin{figure}[ht]
  \centering
  \subfigure[Error of the value function and policy.\label{fig:vponlinefrozen}]{
    \includegraphics[width=0.34\textwidth]{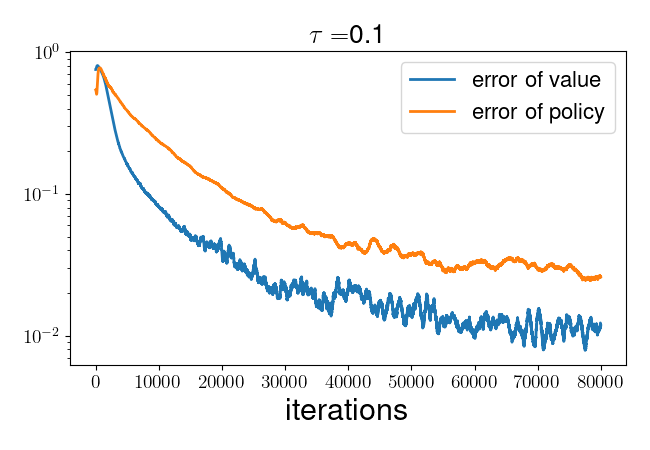} }
  \hspace{0em}
  \subfigure[The Lyapunov function. \label{fig:lyaponlinefrozen}]{
    \includegraphics[width=0.34\textwidth]{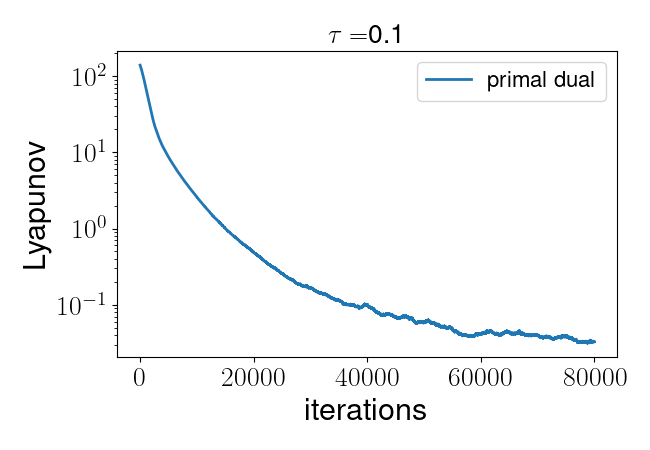} }
  \caption{Performance of \Cref{alg:ANG2_online} for the $8\times8$ FrozenLake problem. (a): Convergence of the value and policy during training of \Cref{alg:ANG2_online}; Blue curve: the convergence of $\|\pi-\pi^*\|_F/\|\pi^*\|_F$ in the training process; Orange curve: the convergence of $\|v-v^*\|_2/\|v^*\|_2$ in the training process. (b): Lyapunov function \eqref{eq:lyap2}. A logarithmic scale is used for all vertical axes.}
  \label{fig:onlinefrozen}
\end{figure}

Similar to the previous example, both the error of the value function and the error of the policy function reduce in the training process, indicating the effectiveness of \Cref{alg:ANG2_online} given sample access to the MDP. The oscillations represent the randomness in the samples gathered in each batch. The final errors for the value and policy are $0.012$ and $0.026$, respectively. The Lyapunov function also shows a clear decreasing trend along the training process. 

\section{Conclusion and discussion}\label{sec:conclusion}
In this paper, we focused on the primal-dual formulation of entropy regularized Markov decision problems. We proposed a quadratically convexified primal-dual formulation that makes the landscape of the objective function smoother and enables faster numerical algorithms. We proved the equivalence of the quadratically convexified primal-dual formulation with the original primal-dual formulation. Leveraging the enhanced convexity of the objective function, we proposed an NGAD method and proved its convergence properties using the Lyapunov methods. We further introduced an INGAD algorithm that accelerates convergence significantly. The efficiency and robustness of the proposed algorithms are demonstrated through multiple numerical experiments. 

For future directions, one can potentially extend the convergence analysis here to the finite sample case with standard statistical methods. Another interesting direction to explore is the application of other optimization techniques to the convexified formulation proposed here. 

\section{Proofs}\label{sec:proofs}

\subsection{Proof of \Cref{thm:equi}}

\begin{proof}[Proof of \Cref{thm:equi}]
First, we show that there exists a unique solution to \eqref{eq:pdstandard}. By
\cite{geist2019theory}, there exists a unique optimal policy $\pi^*$ and a unique optimal value
function $v^* = v_{\pi^*}$ such that \eqref{eq:optimality}, or equivalently, \eqref{eq:primal} and
\eqref{eq:vtopi} hold. From \cite{ying2020note}, we know that this optimal value function and policy
$(v^*, \pi^*)$ also yields a solution $(v^*, u^\circ)$ to the primal-dual problem by $u_{sa}^\circ = \pi_{sa}^*(K_{\pi^*}^{-\T}e)_s$. Also from \cite{ying2020note} we know that any solution to the primal-dual formulation \eqref{eq:pdstandard} satisfies $v = v^*$, $u_{sa}/\bu_s=\pi_{sa}^*, (s,a)\in S\times A$ and $\bu = K_{\pi^*}^{-\T}e$, which combined with the uniqueness of $(v^*, \pi^*)$ shows that the solution $(v^*, u^\circ)$ to \eqref{eq:pdstandard} is unique.

Next, we show that $(v^*, u^*)$ satisfies the first-order condition of \eqref{eq:strict}, where 
\eq{\label{eq:defmu} u_{sa}^* := \frac{w_s}{\bu_s^\circ}u_{sa}^\circ, ~w := \alpha K_{\pi^*}^{-\T}v^*.}
The first-order condition of \eqref{eq:pdstandard} gives
\eq{
\begin{aligned}
\label{eq:standard_1st}
  e_{s'} - \sum_{sa} K_{ass'} u_{sa}^\circ &= 0, \quad\forall s'\in S,\\
  r_{sa} - \sum_{s'} K_{ass'} v_{s'}^* - \tau \log (u_{sa}^\circ/\bu_s^\circ)  &=0, \quad\forall (s, a)\in S\times A.
\end{aligned}
}
Since $e_{s'} = \sum_{sa} K_{ass'} u_{sa}^\circ=\sum_s\sum_a K_{ass'} \pi_{sa}^*\bu_s^\circ=\sum_s K_{\pi^* ss'}\bu_s^\circ$,
we have $\bu^\circ = K_{\pi^*}^{-\T}e = e+\sum_{k=1}^{\infty}\gamma^k (P_{\pi^*}^\T)^k e$, 
and thus $\bu_s^\circ\geq e_s>0$ for all $s\in S$. Similarly, $w_s\geq\alpha v_s^*$ for all $s\in S$ since $w = \alpha K_{\pi^*}^{-\T}v^*$. By \eqref{eq:vtopi}, it is also known that $\pi_{sa}^* > 0$
for all $(s, a)\in S\times A$, so $(r_{\pi^*}-\tau h_{\pi^*})_s>0$ for all $s\in S$ since $r$ is
nonnegative. Again by an expansion of $K_{\pi^*}^{-1}$, one can show that $v_s^* = K_{\pi^*}^{-1}(r_{\pi^*}-\tau h_{\pi^*}) \geq (r_{\pi^*}-\tau h_{\pi^*})_s>0$.
Hence $ u_{sa}^* = \frac{w_s}{\bu_s^\circ}u_{sa}^\circ>0$ is well-defined. In addition, $\bu^* = w$ and
\eq{\label{eq:samepolicy} \frac{u_{sa}^*}{\bu_{sa}^*}=\frac{u_{sa}^\circ}{\bu_s^\circ} = \pi_{sa}^*,}
As a result,
\eq{\label{eq:replace1} r_{sa} - \sum_{s'} K_{ass'} v_{s'}^* = \tau \log
  \frac{u_{sa}^\circ}{\bu_s^\circ} =\tau \log \frac{u_{sa}^*}{\bu_{sa}^*}, \quad\forall (s, a)\in
  S\times A.
}
Moreover, one can show that \eq{\label{eq:replace2}
\begin{aligned}
  \alpha v_{s'}^* &= K_{\pi^*}^\T w=\sum_s K_{\pi^* ss'}\frac{u_{sa}^*}{\pi_{sa}^*}=\sum_s\left(\sum_a K_{ass'} \pi_{sa}^*\right)\frac{u_{sa}^*}{\pi_{sa}^*} = \sum_{sa} K_{ass'} u_{sa}^*, \quad\forall s'\in S.
\end{aligned}
}
Combining \eqref{eq:replace1} and \eqref{eq:replace2}, we conclude that $(v^*, u^*)$ is a solution to 
\eq{\label{eq:strict1st}
\begin{aligned}
  \alpha v_{s'} - \sum_{sa} K_{ass'} u_{sa} &= 0, \quad\forall s'\in S,\\
  r_{sa} - \sum_{s'} K_{ass'} v_{s'} - \tau \log \frac{u_{sa}}{\bu_s} &=0, \quad\forall (s, a)\in S\times A.
\end{aligned}
} This is the first-order stationary condition for the problem \eqref{eq:strict}.

Finally, we show that $(v^*, u^*)$ is the unique solution to \eqref{eq:strict1st}.  Assume that $(v^1,u^1)$
and $(v^2, u^2)$ are both solutions to \eqref{eq:strict1st}. If $v^1\not=v^2$, then $E(v^1,
u^1)<E(v^2,u^1)$ and $E(v^2, u^2)<E(v^1, u^2)$ since for any $u$, $E(v, u)$ is strictly convex in $v$. On the other hand, for any $v$, $E(v, u)$ is concave in $u$ (see for
example \cite{neu2017unified} or \cite{ying2020note}). So $E(v^1, u^1)\geq E(v^1, u^2)$ and
$E(v^2,u^2)\geq E(v^2, u^1)$ and
\[
E(v^1, u^1)\geq E(v^1, u^2)>E(v^2, u^2)\geq E(v^2, u^1)>E(v^1, u^1),
\]
which is a contradiction, so we must have $v^1=v^2$ instead. By the second equation in \eqref{eq:strict1st},
\[
\frac{u^1_{sa}}{\bu^1_{s}} = \exp \left(\tau^{-1}(r_{sa} - \sum_{s'} K_{ass'} v^1_{s'})\right) =
\exp\left(\tau^{-1}\left(r_{sa} - \sum_{s'} K_{ass'} v^2_{s'}\right)\right) =
\frac{u^1_{sa}}{\bu^1_{s}},
\]
thus $\pi^1 = \pi^2$, where $\pi^1_{sa}=\frac{u^1_{sa}}{\bu^1_{s}}$, $\pi^2_{sa} = \frac{u^2_{sa}}{\bu^2_{s}}$. Since $(\pi^1, v^1)=(\pi^2, v^2)$, by the first equation in \eqref{eq:strict1st},
\[
\bu^1 = \alpha K_{\pi_1}^{-\T} v^1 = \alpha K_{\pi_2}^{-\T} v^2 = \bu^2,
\]
As a result, 
\[
u^1_{sa} = \bu^1_{s} \cdot \frac{u^1_{sa}}{\bu^1_{s}} = \bu^2_{s} \cdot \frac{u^2_{sa}}{\bu^2_{s}} =
u^2_{sa}, \quad \forall (s, a)\in S\times A,
\]
and $(v^1, u^1) = (v^2, u^2)$. Hence the solution to \eqref{eq:strict1st} is unique. Theerefore, $(v^*,u^*)$ is
the unique solution to \eqref{eq:strict1st}. By equation \eqref{eq:samepolicy}, the policy yielded
by $u^*$ coincides with the optimal policy $\pi^*$, which finishes the proof.

\end{proof}

\subsection{Proof of \Cref{lemma:L}}

\begin{proof}[Proof of \Cref{lemma:L}]
From the definition of $L$ we know that $\frac{\p^2 L}{\p u_{sa} \p v_{s'}} = 0$. Moreover, 
\[\begin{aligned}
   \frac{\p^2 L}{\p v_s \p v_{s'}} = \alpha \delta_{ss'},~\frac{\p^2 L}{\p u_{sa} \p u_{s'a'}} =\tau\delta_{(s,a),(s',a')}\frac{u_{sa}^*}{u_{sa}^2},  \quad (s, s', a, a')\in S^2\times A^2,
\end{aligned}
\]
which means that the Hessian matrix of $L$ is a diagonal matrix with positive diagonal elements on
the domain $\mathbb{R}^{|S|}\times\mathbb{R}_+^{|S|\times|A|}$. Hence $L$ is strictly convex. Since
the first-order condition: \eq{\label{eq:lyap1st}
\begin{aligned}
  \frac{\p L}{\p v_s} = \alpha (v_s - v_s^*) = 0, ~\frac{\p L}{\p u_{sa}} = \tau\left(1 - \frac{u_{sa}^*}{u_{sa}} \right) = 0, \quad (s, a) \in S\times A,
\end{aligned}
}
has a unique solution $(v, u) = (v^*, u^*)$, it is also the unique global minimum of $L$. Let
\[
\varphi_s(x) = \frac{1}{2}\alpha |x-v^*_s|^2, \quad \psi_{sa}(x)=\tau (u^*_{sa}\log u^*_{sa}/x + x-u^*_{sa}).
\]
By the calculation above, one can also show that $\varphi_s$ and $\psi_{sa}$ are strictly convex and
non-negative. Moreover, since $\underset{x\rightarrow+\infty}{\lim}\psi_{sa}(x) = +\infty$, we have $M(C) = \underset{sa}{\max}\sup\{x>0\mid \psi_{sa}(x) \leq C\}<+\infty$. 
As a result, the sublevel set
\[
\{(v, u)\mid L(v, u)\leq C\}\subset \{(v, u)\mid |v_{s'}-v_{s'}^*|\leq\sqrt{2C/\alpha},
0<u_{sa}<M(C), s'\in S, (s, a)\in S\times A\}
\]
is bounded.
\end{proof}

\subsection{Proof of \Cref{lemma:lyap}}
We first prove the following lemma.
\begin{lemma}
\label{lemma:helpconvex}
Define $H:\mathbb{R}_+^{|A|}\rightarrow \mathbb{R}$ by $H(z) = \sum_az_a\log
z_a-\bar{z}\log\bar{z}$, where $\bar{z} = \sum_az_a$, then $H$ is convex. Moreover,
$(z_1-z_2)\cdot(\grad H(z_1)-\grad H(z_2))\geq0$, and the equality is achieved if and only if $z_2 =
cz_1$ for some $c>0$.
\end{lemma}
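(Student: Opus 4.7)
The plan is to compute the gradient and Hessian of $H$ explicitly and then use them to verify both claims. A direct calculation gives $\frac{\p H}{\p z_a} = \log z_a - \log \bar{z} = \log(z_a/\bar{z})$ and $\frac{\p^2 H}{\p z_a \p z_b} = \frac{\delta_{ab}}{z_a} - \frac{1}{\bar{z}}$, so in matrix form $\grad^2 H(z) = \diag(1/z) - (1/\bar{z})\mathbf{1}\mathbf{1}^\T$. For any $v\in\mathbb{R}^{|A|}$, applying Cauchy--Schwarz to the vectors $(v_a/\sqrt{z_a})_a$ and $(\sqrt{z_a})_a$ yields $\bigl(\sum_a v_a\bigr)^2 \le \bar{z}\sum_a v_a^2/z_a$, whence $v^\T \grad^2 H(z)\, v \ge 0$, proving $H$ is convex. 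The equality case in Cauchy--Schwarz requires $v_a/\sqrt{z_a}\propto \sqrt{z_a}$, so the nullspace of $\grad^2 H(z)$ is exactly $\spanop(z)$.

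The monotonicity inequality $(z_1-z_2)\cdot(\grad H(z_1)-\grad H(z_2))\ge 0$ then follows by adding the two supporting hyperplane inequalities $H(z_1)\ge H(z_2)+\grad H(z_2)\cdot(z_1-z_2)$ and $H(z_2)\ge H(z_1)+\grad H(z_1)\cdot(z_2-z_1)$. For the equality characterization, the key tool is the fundamental theorem of calculus representation
\[
(z_1-z_2)\cdot(\grad H(z_1)-\grad H(z_2)) = \int_0^1 (z_1-z_2)^\T \grad^2 H(z_2+t(z_1-z_2))(z_1-z_2)\,\d t.
\]
Since the integrand is nonnegative by the PSD Hessian and continuous in $t$, the integral vanishes iff the integrand vanishes for every $t\in[0,1]$. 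Evaluating at $t=0$ and invoking the nullspace characterization, $z_1-z_2\in\spanop(z_2)$, i.e., $z_1 = \lambda z_2$ for some $\lambda\in\mathbb{R}$; positivity of $z_1,z_2$ forces $\lambda>0$, which is equivalent to $z_2 = c z_1$ with $c=1/\lambda>0$. For the converse, if $z_2 = c z_1$ with $c>0$ then $\grad H(z_2) = \log(c z_1/(c\bar{z}_1)) = \log(z_1/\bar{z}_1) = \grad H(z_1)$, so the inner product is zero.

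The main obstacle is the equality characterization, and the approach above handles it cleanly by combining the integral identity with the explicit nullspace analysis from Cauchy--Schwarz. An alternative (perhaps more transparent) perspective is that $H$ is positively $1$-homogeneous, $H(cz)=cH(z)$, so $\grad H$ is $0$-homogeneous and automatically constant along rays through the origin; the remaining work is to check that $H$ is strictly convex on each constant-sum slice $\{\bar{z}=\kappa\}$, which is immediate since $H$ restricted to such a slice reduces to the (strictly convex) negative Shannon entropy up to an additive constant. Either route yields the stated lemma.
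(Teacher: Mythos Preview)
Your proof is correct and follows essentially the same approach as the paper: compute the Hessian, apply Cauchy--Schwarz to obtain positive semidefiniteness, and for the equality case argue that the quadratic form $(z_1-z_2)^\T \grad^2 H(\cdot)(z_1-z_2)$ must vanish at some point on the segment, then invoke the Cauchy--Schwarz equality condition. The only cosmetic differences are that the paper phrases the last step via the auxiliary function $h(t)=H(z_1+t(z_2-z_1))$ (deducing $h''(0)=0$ from $h'(0)=h'(1)$) rather than your integral identity, and that you add the converse direction and the homogeneity remark, neither of which appears in the paper.
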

\begin{proof}
The second-order derivatives of $H$ read $\frac{\p^2 H}{\p z_a \p z_{a'}} =
\frac{\delta_{aa'}}{z_a}-\frac{1}{\bar{z}}$. By the Cauchy-Schwarz inequality, for any $x\in \mathbb{R}^{|A|}$
\[
\begin{aligned}
  \sum_{aa'}x_a\frac{\p^2 H}{\p z_a \p z_{a'}}x_{a'}&=\sum_{aa'}x_a\left(\frac{\delta_{aa'}}{z_a}-\frac{1}{\bar{z}}\right)x_{a'}=\sum_a\frac{x_a^2}{z_a}-\frac{1}{\bar{z}}\left(\sum_ax_a\right)^2\geq 0.
\end{aligned}
\]
Hence the Hessian matrix of $H$ is positive semi-definite and $H$ is convex.  By convexity $(z_1-z_2)\cdot(\grad H(z_1)-\grad H(z_2))\geq0$.
Suppose now that equality holds. If $z_1 = z_2$, then clearly $z_2=cz_1$ for $c=1$. If
$z_1\not=z_2$, let $h(t) = H(z_1+t(z_2-z_1))$, then $h$ is also convex and $h'(0) = (z_2-z_1)\cdot
\grad H(z_1) = (z_2-z_1)\cdot \grad H(z_2) = h'(1)$, so $h'(t) = h'(0)$ for any $t\in[0, 1]$, 
thus
\[
0 = h''(0) = (z_2-z_1)^\T\grad^2 H(z_1)(z_2-z_1).
\]
Hence from the equality condition of the Cauchy-Schwarz inequality, we conclude
$z_2-z_1=\tilde{c}z_1$ and thus $z_2 = cz_1$ for some $c$, and we have $c>0$ since $z_1, z_2\in \mathbb{R}_+^{|A|}$.
\end{proof}

\begin{proof}[Proof of \Cref{lemma:lyap}]
By \Cref{thm:equi}, $(v^*, u^*)$ is also the unique solution to \eqref{eq:strict1st}, so
\eq{\label{eq:soln1st}
\begin{aligned}
  \alpha v^*_{s'} - \sum_{sa} K_{ass'} u^*_{sa} &= 0, \quad s'\in S,\\
  \left(r_{sa}-\sum_{s'} K_{ass'} v^*_{s'} \right) - \tau \log \frac{u^*_{sa}}{\bu^*_{s}} &=0, \quad (s, a)\in S\times A.
\end{aligned}
}
Subtracting this from the dynamics \eqref{eq:ANGflow1} leads to
\eq{\label{eq:relflow}
\begin{aligned}
  \dt{v_{s'}} & = - \left((v_{s'}-v_{s'}^*) - \frac{1}{\alpha} \sum_{sa} K_{ass'} (u_{sa}-u^*_{sa})\right), \quad s'\in S,  \\
  \dt{u_{sa}} & = - u_{sa} \left( \left(\log\frac{u_{sa}}{\bu_{s}}-\log\frac{u^*_{sa}}{\bu^*_{s}} \right) + \frac{1}{\tau}\sum_{s'} K_{ass'} (v_{s'}-v_{s'}^*) \right), \quad (s, a)\in S\times A. 
\end{aligned}
}
Taking the derivative of $L$ gives
\eq{\label{eq:dtL}
\begin{aligned}
  \dt{L} =& - \alpha \sum_{s'} (v_{s'} - v^*_{s'}) \left((v_{s'}-v^*_{s'}) - \frac{1}{\alpha} \sum_{sa} K_{ass'} (u_{as}-u^*_{as}) \right)\\
  &-\tau \sum_{sa} \frac{u_{sa}- u^*_{sa}}{u_{sa}}\cdot u_{sa} \left( \left(\log\frac{u_{sa}}{\bu_{s}}-\log\frac{u^*_{sa}}{\bu^*_{s}}\right) + \frac{1}{\tau}\sum_{s'} K_{ass'} (v_{s'}-v^*_{s'}) \right)\\
  =& -\alpha \sum_{s'} (v_{s'} - v^*_{s'})^2 - \tau \sum_{sa} (u_{sa}- u^*_{sa}) \left(\log\frac{u_{sa}}{\bu_{s}}-\log\frac{u^*_{sa}}{\bu^*_{s}}\right),
\end{aligned}
}
where we have used \eqref{eq:lyap1st}. By \Cref{lemma:helpconvex},
\[
\sum_{sa}(u_{sa}- u^*_{sa}) (\log u_{sa}/\bu_{s}-\log u^*_{sa}/\bu^*_{s}) = \sum_s
(u_s-u_s^*)\cdot(\grad H(u_s)-\grad H(u_s^*)) \geq 0,
\]
where $H$ is defined in \Cref{lemma:helpconvex}. Therefore,
\[
-\alpha \sum_{s'} (v_{s'} - v^*_{s'})^2 - \tau \sum_{sa} (u_{sa}- u^*_{sa}) \left(\log\frac{u_{sa}}{\bu_{s}}-\log\frac{u^*_{sa}}{\bu^*_{s}}\right)\leq0.
\]
By \Cref{lemma:helpconvex} the equality holds only when $v=v^*$ and $u_{sa} = c_su_{sa}^*$ for
$c_s>0$, $(s, a)\in S\times A$.  Let
\[
\mathcal{R} = \{(v, u)\mid -\alpha \sum_{s'} (v_{s'} - v^*_{s'})^2 - \tau \sum_{sa}
(u_{sa}-u^*_{sa}) (\log u_{sa}/\bu_{s}-\log u^*_{sa}/\bu^*_{s})=0\},
\]
then $\mathcal{R} = \{(v, u)\mid v = v^*, u_{sa} = c_su_{sa}^*, c_s\in\mathbb{R}_+, s\in S\}$. We
proceed to prove that the only trajectory of \eqref{eq:relflow} in $\mathcal{R}$ is
$(v,u)=(v^*,u^*)$. Since $v=v^*$ for any $(v,u)\in\mathcal{R}$, $\dt{v_{s'}}=0$ for any $s'\in S$.
The following equality
\eq{\label{eq:nonsing}
  \begin{aligned}
    0 & = \sum_{sa} K_{ass'} (u_{sa}-u^*_{sa}) = \sum_{sa}
    K_{ass'} (c_s-1)u^*_{sa} \\
    & = \sum_{sa} K_{ass'}  (c_s-1)\bu^*_s\pi_{sa}^*=\sum_sK_{\pi^*ss'}(c_s-1)\bu^*_s
  \end{aligned}
} means that, for any point $(v,u)$ on the trajectory of \eqref{eq:relflow} in $\mathcal{R}$,
$K_{\pi^*}^{\T} ((c-1)\bu^*) = 0$. Here $(c-1)\bu^*$ is the vector with length $|S|$ whose $s$-th
element is $(c_s-1)\bu^*_s$. Thus $c_s = 1$ for any $s\in S$, and the trajectory is a single point $(v, u) = (v^*, u^*)$.
\end{proof}

\subsection{Proof of \Cref{thm:linrate}}

\begin{proof}[Proof of \Cref{thm:linrate}]
The linearized dynamic of the standard natural gradient \eqref{eq:ANGflow1} is
\eq{\label{eq:linflow}
\begin{aligned}
  \dt{v_{s'}} & = - \left(((v_{s'}-v^*_{s'}) - \frac{1}{\alpha} \sum_{sa} K_{ass'} (u_{sa}-u^*_{sa})\right), \quad s'\in S,  \\
  \dt{u_{sa}} & = - u_{sa}^* \left( \frac{1}{\tau}\sum_{s'} K_{ass'} (v_{s'}-v^*_{s'}) + \frac{u_{sa}-u_{sa}^*}{u_{sa}^*} -\frac{\bu_s-\bu_s^*}{\bu_s^*}\right), \quad (s, a)\in S\times A. 
\end{aligned}
} Define matrix $\tilde{K}$ by $\tilde{K}_{(s-1)|A|+a, s'} = K_{ass'}$, and let $\delta v=v-v^*$, $\delta u=u-u^*$. Then \eqref{eq:linflow} becomes
\eq{\label{eq:linmat} \frac{\mathrm{d}}{\mathrm{d}t}\begin{bmatrix} {\delta v}\\ {\delta u}
\end{bmatrix}
= -\begin{bmatrix}
I_{|S|}&-\frac{1}{\alpha}\tilde{K}^\T\\
\frac{1}{\tau}\diag{(u^*)}\tilde{K}& \diag{(u^*)}M
\end{bmatrix}
\begin{bmatrix}
\delta v\\
\delta u
\end{bmatrix},
} where $\diag{(u^*)}$ is a diagonal matrix whose $((s-1)|A|+a)$-th diagonal element is
$u_{sa}^*$. Here $M$ is a block-diagonal matrix defined as: \eq{\label{eq:M} M=
\begin{bmatrix}
M_1&&\\
&\ddots&\\
&&M_{|S|}
\end{bmatrix},\quad
M_s = \diag{\left((u_s^*)^{-1}\right)}-\frac{1}{\bu_s^*}\mathbf{1}_{|A|}\mathbf{1}_{|A|}^\T, \quad s\in S,
}
where $\diag{\left((u_s^*)^{-1}\right)}$ is a diagonal $|A|\times|A|$ matrix with the $a$-th diagonal element equal to $1/u_{sa}^*$. Notice that $M$ is symmetric and by the Cauchy-Schwarz inequality, for any $x\in \mathbb{R}^{|A|}$
\eq{\label{eq:Mpos}
\begin{aligned}
  \sum_{aa'}x_aM_sx_{a'}&=\sum_a\frac{x_a^2}{u_{sa}^*}-\frac{1}{\bar{u}_s^*}\left(\sum_ax_a\right)^2\geq0, \quad s\in S.
\end{aligned}
}Hence $M_s$ is positive semi-definite for all $s$ and thus $M$ is also positive semi-definite.
Define invertible matrix $P$ as
\[
P = \begin{bmatrix}
\sqrt{\tau}I_{|S|}&\\
& \sqrt{\alpha}\diag{\left(\sqrt{u^*}\right)}
\end{bmatrix},
\]
where $\diag{\left(\sqrt{u^*}\right)}$ is a diagonal $|S||A|\times|S||A|$ matrix with the $((s-1)|A|+a)$-th diagonal element equal to $\sqrt{u_{sa}^*}$. Denote the matrix in the linearized dynamics \eqref{eq:linmat} as $-J$, i.e., 
\[J = \begin{bmatrix}
I_{|S|}&-\frac{1}{\alpha}\tilde{K}^\T\\
\frac{1}{\tau}\diag{(u^*)}\tilde{K}& \diag{(u^*)}M
\end{bmatrix}.
\]
Then
\[
\begin{aligned}
P^{-1}JP &= 
\begin{bmatrix}
\frac{1}{\sqrt{\tau}}I_{|S|}&\\
& \frac{1}{\sqrt{\alpha}}\diag{\left((\sqrt{u^*})^{-1}\right)}
\end{bmatrix}
\begin{bmatrix}
I_{|S|}&-\frac{1}{\alpha}\tilde{K}^\T\\
\frac{1}{\tau}\diag{(u^*)}\tilde{K}& \diag{(u^*)}M
\end{bmatrix}
\begin{bmatrix}
\sqrt{\tau}I_{|S|}&\\
& \sqrt{\alpha}\diag{\left(\sqrt{u^*}\right)}
\end{bmatrix}
\\
&=\begin{bmatrix}
I_{|S|}&-\frac{1}{\sqrt{\alpha\tau}}\tilde{K}^\T\diag{\left(\sqrt{u^*}\right)}\\
\frac{1}{\sqrt{\alpha\tau}}\diag{\left(\sqrt{u^*}\right)}\tilde{K}& \diag{\left(\sqrt{u^*}\right)}M\diag{\left(\sqrt{u^*}\right)}
\end{bmatrix}.
\end{aligned}
\]
It suffices to show that the real part of the
eigenvalues of $P^{-1}JP$ is positive. Denote $P^{-1}JP$ by $\tilde{J}$. Using the positive semi-definiteness of $M$, for any eigenpair $(\lambda, x)$ of $\tilde{J}$ we can deduce \eq{\label{eq:realpartineq}
\begin{aligned}
\operatorname{Re}(\lambda) =~& \frac{1}{2}\left(\frac{x^\Hlg\tilde{J}x}{x^\Hlg x}+\frac{x^\Hlg\tilde{J}^\Hlg x}{x^\Hlg x}\right)\\
=~& \frac{1}{2x^\Hlg x}x^\Hlg\Bigg(\begin{bmatrix}
I_{|S|}&-\frac{1}{\sqrt{\alpha\tau}}\tilde{K}^\T\diag{\left(\sqrt{u^*}\right)}\\
\frac{1}{\sqrt{\alpha\tau}}\diag{\left(\sqrt{u^*}\right)}\tilde{K}& \diag{\left(\sqrt{u^*}\right)}M\diag{\left(\sqrt{u^*}\right)}
\end{bmatrix}\\
&+
\begin{bmatrix}
I_{|S|}&\frac{1}{\sqrt{\alpha\tau}}\tilde{K}^\T\diag{\left(\sqrt{u^*}\right)}\\
-\frac{1}{\sqrt{\alpha\tau}}\diag{\left(\sqrt{u^*}\right)}\tilde{K}& \diag{\left(\sqrt{u^*}\right)}M\diag{\left(\sqrt{u^*}\right)}
\end{bmatrix}
\Bigg)x\\
=~&\frac{1}{x^\Hlg x}x^\Hlg\begin{bmatrix}
I_{|S|}&\\
& \diag{\left(\sqrt{u^*}\right)}M\diag{\left(\sqrt{u^*}\right)}
\end{bmatrix}x\\
\geq ~ & 0,
\end{aligned},
} where the superscript $\Hlg$ denotes the Hermitian transpose. Now we proceed to show $\operatorname{Re}(\lambda)\not=0$. Let $x = \begin{bmatrix} x_1 \\ x_2 \end{bmatrix}$, where
$x_1\in\mathbb{R}^{|S|}$, $x_2\in\mathbb{R}^{|S||A|}$. If $\operatorname{Re}(\lambda)=0$, then
\[
0 = x^\Hlg
\begin{bmatrix}
  I_{|S|}&\\
  & \diag{\left(\sqrt{u^*}\right)}M\diag{\left(\sqrt{u^*}\right)}
\end{bmatrix}x
= x_1^\Hlg x_1 + (\diag{(\sqrt{u^*})}x_2)^\Hlg M(\diag{(\sqrt{u^*})}x_2) \ge 0,
\]
thus $x_1 = 0$ and the equality condition of the Cauchy-Schwarz inequality \eqref{eq:Mpos} must
hold. Hence $(x_2)_{sa} = c_s\sqrt{u_{sa}^*}$ for some $c_s\in\mathbb{R}$, $s\in S$. We also know
that $c_s$ is not all zero for $s\in S$; otherwise, $x_2=0$ and $x=0$ is not an eigenvector. Thus
\[
\tilde{J}x = \begin{bmatrix}
I_{|S|}&-\frac{1}{\sqrt{\alpha\tau}}\tilde{K}^\T\diag{\left(\sqrt{u^*}\right)}\\
\frac{1}{\sqrt{\alpha\tau}}\diag{\left(\sqrt{u^*}\right)}\tilde{K}& \diag{\left(\sqrt{u^*}\right)}M\diag{\left(\sqrt{u^*}\right)}
\end{bmatrix}x 
=\frac{-1}{\sqrt{\alpha\tau}}\begin{bmatrix}
\tilde{K}^\T\diag{\left(\sqrt{u^*}\right)}x_2\\
0
\end{bmatrix},
\]
which is not a scalar multiple of $x$ unless $\tilde{K}^\T\diag{\left(\sqrt{u^*}\right)}x_2=0$. However, as
\[
\left(\tilde{K}^\T\diag{\left(\sqrt{u^*}\right)}x_2\right)_{s'} = \sum_{sa}K_{ass'}c_su_{sa}^* = \sum_s K_{\pi^*ss'}c_s\bu_s^*, \quad s'\in S,
\]
$\tilde{K}^\T\diag{\left(\sqrt{u^*}\right)}x_2 = K_{\pi^*}^\T c\bu^*$ where $c\bu^*$ denotes the elementwise product. Thus $K_{\pi^*}^\T c\bu^*=0$ and then $c\bu^*=0$, contradicting with the fact that $c_s$ is not all zero.
The contradiction means that $\operatorname{Re}(\lambda)\not=0$. Together with the inequality
\eqref{eq:realpartineq} we have $\operatorname{Re}(\lambda)>0$ for any eigenvalue $\lambda$ of
$J$. Hence $\operatorname{Re}(\lambda)<0$ for any eigenvalue $\lambda$ of $-J$, the matrix in the
linearized dynamics \eqref{eq:linmat}. By Lyapunov's indirect theorem \cite{haddad2011nonlinear},
\eqref{eq:ANGflow1} has locally exponential convergence.
\end{proof}

\subsection{Proof of \Cref{lemma:Lc}}

\begin{proof}[Proof of \Cref{lemma:Lc}]
Similar to \Cref{lemma:L}, we first note that $\frac{\p^2 L}{\p u_{sa} \p v_{s'}} = 0$. Moreover,
\[\begin{aligned}
   \frac{\p^2 L_c}{\p v_s \p v_{s'}} = \alpha \delta_{ss'}, ~\frac{\p^2 L_c}{\p u_{sa} \p u_{s'a'}} =\tau\delta_{ss'}\left(\delta_{aa'}\frac{u_{sa}^*}{u_{sa}^2}+\frac{c\bu_s^*}{(1-c)\bu_s^2}\right),  \quad (s, s', a, a')\in S^2\times A^2.
\end{aligned}
\]
Hence the Hessian matrix of $L_c$ is
\[
\begin{bmatrix}
\alpha I_{|S|}&0\\
0& \tau\diag{(u^*/u^2)} + \frac{c\tau}{1-c}B
\end{bmatrix},
\]
where $(u^*/u^2)_{sa} = u_{sa}^*/u_{sa}^2$ and $B$ is a positive definite block-diagonal matrix:
\eq{\label{eq:B}
B:=\begin{bmatrix}
\frac{\bu_1^*}{\bu_1^2}  \mathbf{1}_{|A|}\mathbf{1}_{|A|}^\T&&\\
&\ddots&\\
&&\frac{\bu_{|S|}^*}{\bu_{|S|}^2}  \mathbf{1}_{|A|}\mathbf{1}_{|A|}^\T
\end{bmatrix}.
} Thus the Hessian of $L_c$ is positive definite
and $L_c$ is strictly convex. The derivatives of $L_c$ are 
\eq{\label{eq:L1st}
\begin{aligned}
\frac{\p L_c}{\p v_s} &= \alpha (v_s - v_s^*), \quad s\in S,\\
  \frac{\p L_c}{\p u_{sa}} &= \tau \left( \frac{u_{sa}- u^*_{sa}}{u_{sa}} + \frac{c}{1-c}  \frac{\bu_s-\bu^*_s}{\bu_s} \right), \quad (s, a)\in S\times A,
\end{aligned}
}
from which we can see that $(v^*, u^*)$ is a solution to the first-order condition $\frac{\p L_c}{\p v} = 0$, $\frac{\p L_c}{\p u} = 0$. Since $L_c$ is strictly convex, it is also the unique minimizer of $L_c$. Now we prove that $L_c$ has bounded sublevel sets. Let $\ell(u) = \sum_s (\bu^*_{s}\log \bu^*_{s}/\bu_{s} + \bu_{s}-\bu^*_{s})$. Then $L_c(v, u) = L_0(v, u) + \frac{c\tau}{1-c}\ell(u)$. Since $\frac{\p^2 \ell}{\p u^2} = B$ is
positive definite, $\ell$ is strictly convex. Moreover, $\frac{\p \ell}{\p u_{sa}} =
(\bu_s-\bu^*_s)/\bu_s $ equals to $0$ when $u = u^*$, so by the strict convexity of $\ell$, $u^*$ is
the unique minimizer of $\ell$, and thus $\ell(u)\geq\ell(u^*)=0$. Hence the sublevel set $\{(v,u)\mid L_c(v, u)\leq C\}\subset \{(v, u)\mid L_0(v, u)\leq C\}$. Since the latter is bounded according to \Cref{lemma:L}, the sublevel set of $L_c$ is also
bounded.
\end{proof}

\subsection{Proof of \Cref{lemma:lyap2}}

\begin{proof}[Proof of \Cref{lemma:lyap2}]
Plugging the first-order condition \eqref{eq:soln1st} for the exact solution $(v^*, u^*)$ into the
interpolating natural gradient \eqref{eq:ANGflow2} results in \eq{
\label{eq:relflow2}
\begin{aligned}
  \dt{v_{s'}} & = - \left((v_{s'}-v^*_{s'}) - \frac{1}{\alpha} \sum_{sa} K_{ass'} (u_{sa}-u_{sa}^*) \right),\quad s'\in S,\\
  \dt{u_{s\cdot}} & = - \bu_s  \left(\diag(\pi_{s\cdot})-c\pi_{s\cdot}\pi_{s\cdot}^\T\right) \left(
  \left(\log \frac{u_{s\cdot}}{\bu_{s}}-\log\frac{u_{s\cdot}^*}{\bu_{s}^*}\right) + \frac{1}{\tau}\sum_{s'} K_{\cdot ss'} (v_{s'}-v^*_{s'}) \right),~ s\in S,
\end{aligned}
}
where $\pi_{sa}$ is defined as $u_{sa}/\bu_s$. 
A direct calculation shows that
\eq{
(u_{s\cdot}- u_{s\cdot}^*)/u_{s\cdot} + \frac{c}{1-c} (\bu_s-\bu^*_s)/\bu_s\mathbf{1}_{|A|} = \left( \diag(1/\pi_{s\cdot}) + \frac{c}{1-c} \mathbf{1}_{|A|} \mathbf{1}_{|A|}^\T \right)\left( \frac{u_{s\cdot}-u^*_{s\cdot}}{\bu_s}\right)
}
Then 
\eq{\label{eq:dtLc}
\begin{aligned}
  \dt{L} = - \alpha \sum_{s'}& (v_{s'} - v_{s'}^*) \left((v_{s'}-v_{s'}^*) - \frac{1}{\alpha} \sum_{sa} K_{ass'} (u_{sa}-u_{sa}^*) \right)\\
  -\tau \sum_{s}&\Bigg[ \left( \frac{u_{s\cdot }- u_{s\cdot }^*}{\bu_s}\right)^\T \left( \diag(1/\pi_{s\cdot}) + \frac{c}{1-c} \mathbf{1}_{|A|} \mathbf{1}_{|A|}^\T \right)
  \bu_s  \left(\diag(\pi_{s\cdot})-c\pi_{s\cdot}\pi_{s\cdot}^\T\right) \\
  &\left( \left(\log \frac{u_{s\cdot}}{\bu_{s}}-\log\frac{u^*_{s\cdot}}{\bu^*_{s}}\right) + \frac{1}{\tau}\sum_{s'} K_{\cdot ss'} (v_{s'}-v_{s'}^*) \right)\Bigg]\\
  =-\alpha \sum_{s'}& (v_{s'} - v_{s'}^*)^2 - \tau \sum_{sa} (u_{sa}- u^*_{sa}) \left(\log\frac{u_{as}}{\bu_{s}}-\log\frac{u^*_{as}}{\bu^*_{s}}\right),
\end{aligned}
}
where we have used the fact that
\[
\begin{aligned}
&\left( \diag(1/\pi_{s\cdot}) + \frac{c}{1-c} \mathbf{1}_{|A|} \mathbf{1}_{|A|}^\T \right)\left(\diag(\pi_{s\cdot})-c\pi_{s\cdot}\pi_{s\cdot}^\T\right)\\
=&~ \diag(1/\pi_{s\cdot})\diag(\pi_{s\cdot}) + \frac{c}{1-c} \mathbf{1}_{|A|} \mathbf{1}_{|A|}^\T\diag(\pi_{s\cdot})-\frac{c^2}{1-c} \mathbf{1}_{|A|} \mathbf{1}_{|A|}^\T\pi_{s\cdot}\pi_{s\cdot}^\T-c \diag(1/\pi_{s\cdot})\pi_{s\cdot}\pi_{s\cdot}^\T\\
=&~I+\left(\frac{c}{1-c}-\frac{c^2}{1-c}-c\right)\mathbf{1}_{|A|}\pi_{s\cdot}^\T = I \\
\end{aligned}
\]
Therefore, 
\[
\dt{L_c} = -\alpha \sum_{s'} (v_{s'} - v_{s'}^*)^2 - \tau \sum_{sa} (u_{sa}- u^*_{sa}) \left(\log\frac{u_{as}}{\bu_{s}}-\log\frac{u^*_{as}}{\bu^*_{s}} \right),
\]
where the right-hand side coincides with that of \eqref{eq:dtL}. Hence $\dt{L_c} = \dt{L_0} \leq 0$
by the proof of \Cref{lemma:lyap}.  Let
\[
\mathcal{R} = \{(v, u)\mid -\alpha \sum_{s'} (v_{s'} - v^*_{s'})^2 - \tau \sum_{sa} (u_{sa}- u^*_{sa}) (\log u_{sa}/\bu_{s}-\log u^*_{sa}/\bu^*_{s})=0\}.
\]
Then by the proof of \Cref{lemma:lyap}, $\mathcal{R} = \{(v, u)\mid v = v^*, u_{sa} =
c_su_{sa}^*, c_s\in\mathbb{R}_+, s\in S\}$. We proceed to prove that the only trajectory of
\eqref{eq:relflow2} in $\mathcal{R}$ is $(v, u)=(v^*, u^*)$. Since $v = v^*$ for any
$(v,u)\in\mathcal{R}$, $\dt{v_{s'}} = 0$ for $s'\in S$. In addition, for any $s'\in S$ we have
\[
0 = \sum_{sa} K_{ass'} (u_{sa}-u^*_{sa}) = \sum_sK_{\pi^*ss'}(c_s-1)\bu^*_s,
\]
by the same calculation as \eqref{eq:nonsing}.  This means that for point $(v, u)$ on the
trajectory of \eqref{eq:relflow2} in $\mathcal{R}$, $K_{\pi^*}^{\T} ((c-1)\bu^*) = 0$, thus $(c-1)\bu^* = 0$ and $c_s = 1$ for any $s\in S$. Since this is true for any $(v, u)$ on the trajectory, the trajectory is a single point
$(v, u) = (v^*, u^*)$.

\end{proof}

\subsection{Proof of \Cref{thm:linrate2}}

\begin{proof}[Proof of \Cref{thm:linrate2}]
The linearized dynamic of the interpolating natural gradient \eqref{eq:ANGflow2} is
\eq{\label{eq:linflow2}
\begin{aligned}
  \dt{v_{s'}} & = - \left(((v_{s'}-v^*_{s'}) - \frac{1}{\alpha} \sum_{sa} K_{ass'} (u_{sa}-u^*_{sa})\right), ~ s'\in S,  \\
  \dt{u_{s\cdot}} & = -\left(\diag(u_{s\cdot}^*)-\frac{c}{\bu_s^*}u_{s\cdot}^*(u_{s\cdot}^*)^\T\right) \left( \frac{1}{\tau}\sum_{s'} K_{\cdot ss'} (v_{s'}-v_{s'}^*) + \frac{u_{s\cdot}-u_{s\cdot}^*}{u_{s\cdot}^*} -\frac{\bu_s-\bu_s^*}{\bu_s^*}\mathbf{1}\right), ~ s\in S. 
\end{aligned}
} Define $\tilde{K}$ by $\tilde{K}_{(s-1)|A|+a, s'} = K_{ass'}$ and let $\delta v=v-v^*$, $\delta u=u-u^*$. Then \eqref{eq:linflow2} becomes
\eq{\label{eq:linmat2} \frac{\mathrm{d}}{\mathrm{d}t}\begin{bmatrix} {\delta v}\\ {\delta u}
\end{bmatrix}
= -\begin{bmatrix}
I_{|S|}&-\frac{1}{\alpha}\tilde{K}^\T\\
\frac{1}{\tau}G\tilde{K}& GM
\end{bmatrix}
\begin{bmatrix}
\delta v\\
\delta u
\end{bmatrix},
}
where $M$ is a block-diagonal matrix defined as in \eqref{eq:M} and $G$ is a block-diagonal matrix
\eq{\label{eq:G}
\begin{bmatrix}
G_1&&\\
&\ddots&\\
&&G_{|S|}
\end{bmatrix},
} with $G_s = \diag(u_{s\cdot}^*)-\frac{c}{\bu_s^*}u_{s\cdot}^*(u_{s\cdot}^*)^\T$. Notice that $G_s$
is symmetric. By the Cauchy-Schwarz inequality
\[
\begin{aligned}
x^\T G_sx &= \sum_a u_{sa}^*x_a^2-\frac{c}{\bu_s^*}\left(\sum_a u_{sa}^*x_a\right)^2\geq 
\frac{1}{\bu_s^*}\left(\sum_a u_{sa}^*x_a\right)^2-\frac{c}{\bu_s^*}\left(\sum_a u_{sa}^*x_a\right)^2\\
&=\frac{1-c}{\bu_s^*}\left(\sum_a u_{sa}^*x_a\right)^2>0, \quad\forall x\in\mathbb{R}^{|A|}, x\not=0,\quad \forall s\in S.
\end{aligned}
\]
Thus $G$ is positive definite, and we can define the positive definite square root $F$ of $G$, i.e., $F^2 = G$.
Define an invertible matrix $Q$
\[
Q = \begin{bmatrix}
\sqrt{\tau}I_{|S|}&\\
& \sqrt{\alpha}F
\end{bmatrix}
\]
and denote the matrix in the linearized dynamics \eqref{eq:linmat2} as $-J$, i.e.,
\[J = \begin{bmatrix}
I_{|S|}&-\frac{1}{\alpha}\tilde{K}^\T\\
\frac{1}{\tau}G\tilde{K}& GM
\end{bmatrix}.
\]
Then
\[
\begin{aligned}
Q^{-1}JQ = 
\begin{bmatrix}
\frac{1}{\sqrt{\tau}}I_{|S|}&\\
& \frac{1}{\sqrt{\alpha}}F^{-1}
\end{bmatrix}
\begin{bmatrix}
I_{|S|}&-\frac{1}{\alpha}\tilde{K}^\T\\
\frac{1}{\tau}G\tilde{K}& GM
\end{bmatrix}
\begin{bmatrix}
\sqrt{\tau}I_{|S|}&\\
& \sqrt{\alpha}F
\end{bmatrix}
=\begin{bmatrix}
I_{|S|}&-\frac{1}{\sqrt{\alpha\tau}}\tilde{K}^\T F\\
\frac{1}{\sqrt{\alpha\tau}}F\tilde{K}& FMF
\end{bmatrix}.
\end{aligned}
\]
It suffices to show that the real part of the eigenvalues of $Q^{-1}JQ$ is positive. Denote $Q^{-1}JQ$ by $\tilde{J}$. Using the positive semi-definiteness of $FMF$, for any eigenpair $(\lambda, x)$ of $\tilde{J}$ we can deduce \eq{\label{eq:realpartineq2}
\begin{aligned}
\operatorname{Re}(\lambda) =~& \frac{1}{2}\left(\frac{x^\Hlg\tilde{J}x}{x^\Hlg x}+\frac{x^\Hlg\tilde{J}^\Hlg x}{x^\Hlg x}\right)\\
=~& \frac{1}{2x^\Hlg x}x^\Hlg\Bigg(\begin{bmatrix}
I_{|S|}&-\frac{1}{\sqrt{\alpha\tau}}\tilde{K}^\T F\\
\frac{1}{\sqrt{\alpha\tau}}F\tilde{K}& FMF
\end{bmatrix}+
\begin{bmatrix}
I_{|S|}&\frac{1}{\sqrt{\alpha\tau}}\tilde{K}^\T F\\
-\frac{1}{\sqrt{\alpha\tau}}F\tilde{K}& FMF
\end{bmatrix}
\Bigg)x\\
=~&\frac{1}{x^\Hlg x}x^\Hlg\begin{bmatrix}
I_{|S|}&\\
& FMF
\end{bmatrix}x \geq  0.
\end{aligned}
}
It remians to show $\operatorname{Re}(\lambda)\not=0$. Let $x=\begin{bmatrix} x_1\\x_2 \end{bmatrix}$,
where $x_1\in\mathbb{R}^{|S|}$, $x_2\in\mathbb{R}^{|S||A|}$.
Then if $\operatorname{Re}(\lambda)=0$, 
\[
  0 = x^\Hlg\begin{bmatrix} I_{|S|}&\\ & FMF \end{bmatrix}x  =x_1^\Hlg x_1 + (Fx_2)^\Hlg M(Fx_2)\geq  0,
\]
Thus $x_1 = 0$, and the equality condition of the Cauchy-Schwarz inequality \eqref{eq:Mpos} must hold. Hence
$(Fx_2)_{sa} = c_su_{sa}^*$ for some $c_s\in\mathbb{R}$, $s\in S$. We also know that $c_s$ is not
all zero for $s\in S$; otherwise, $x_2=0$, so $x=0$ is not an eigenvector. Thus
\[
\tilde{J}x = \begin{bmatrix}
I_{|S|}&-\frac{1}{\sqrt{\alpha\tau}}\tilde{K}^\T F\\
\frac{1}{\sqrt{\alpha\tau}}F\tilde{K}& FMF
\end{bmatrix}x
=\frac{-1}{\sqrt{\alpha\tau}}\begin{bmatrix}
\tilde{K}^\T Fx_2\\
0
\end{bmatrix},
\]
which is not a scalar multiple of $x$ unless $\tilde{K}^\T Fx_2=0$. Since
\[
\left(\tilde{K}^\T Fx_2\right)_{s'} = \sum_{sa}K_{ass'}c_su_{sa}^* = \sum_s K_{\pi^*ss'}c_s\bu_s^*, \quad s'\in S,
\]
$\tilde{K}^\T Fx_2 = K_{\pi^*}^\T c\bu^*$. Thus $c\bu^*=0$, contradicting the fact that $c_s$ is not all zero. 
This contradiction means that $\operatorname{Re}(\lambda)\not=0$. Together with the inequality
\eqref{eq:realpartineq}, $\operatorname{Re}(\lambda)>0$ for any eigenvalue $\lambda$ of $J$. Hence
$\operatorname{Re}(\lambda)<0$ for any eigenvalue $\lambda$ of $-J$, the matrix in the linearized
dynamics \eqref{eq:linmat2}. Finally, by Lyapunov's indirect theorem \cite{haddad2011nonlinear},
\eqref{eq:ANGflow2} has locally exponential convergence. 
\end{proof}

\bibliographystyle{siamplain}
\bibliography{references}
\end{document}


\maketitle

\section{A detailed example}

Here we include some equations and theorem-like environments to show
how these are labeled in a supplement and can be referenced from the
main text.
Consider the following equation:
\begin{equation}
  \label{eq:suppa}
  a^2 + b^2 = c^2.
\end{equation}
You can also reference equations such as \cref{eq:matrices,eq:bb} 
from the main article in this supplement.

\lipsum[100-101]

\begin{theorem}
  An example theorem.
\end{theorem}

\lipsum[102]
 
\begin{lemma}
  An example lemma.
\end{lemma}

\lipsum[103-105]

Here is an example citation: \cite{KoMa14}.

\section[Proof of Thm]{Proof of \cref{thm:bigthm}}
\label{sec:proof}

\lipsum[106-112]

\section{Additional experimental results}
\Cref{tab:foo} shows additional
supporting evidence. 

\begin{table}[htbp]
{\footnotesize
  \caption{Example table}  \label{tab:foo}
\begin{center}
  \begin{tabular}{|c|c|c|} \hline
   Species & \bf Mean & \bf Std.~Dev. \\ \hline
    1 & 3.4 & 1.2 \\
    2 & 5.4 & 0.6 \\ \hline
  \end{tabular}
\end{center}
}
\end{table}

\bibliographystyle{siamplain}
\bibliography{references}